\newtheorem{theorem}{Theorem}
\newtheorem{definition}[theorem]{Definition}
\newtheorem{corollary}[theorem]{Corollary}
\newtheorem{proposition}[theorem]{Proposition}
\newtheorem*{remark}{Remark}
\newtheorem{example}[theorem]{Example}
\newtheorem{lemma}[theorem]{Lemma}
\def\supp{{\mathop{\rm supp\, }}}
\title{Wigner transform and quasicrystals}
\author{P. Boggiatto, C. Fern\'andez, A. Galbis, A. Oliaro}
\begin{document}
\maketitle
\begin{abstract}  Quasicrystals, defined as in \cite{olevskii}, are
tempered distributions $\mu$ which satisfy symmetric conditions on
$\mu$ and $\widehat \mu$. This suggests that techniques from
time-frequency analysis could possibly be useful tools in the study
of such structures. In this paper we explore this direction
considering quasicrystals type conditions on time-frequency
representations instead of separately on the distribution and its
Fourier transform. More precisely we prove that a tempered
distribution $\mu$ on ${\mathbb R}^d$ whose Wigner transform,
$W(\mu)$, is supported on a product of two uniformly discrete sets
in ${\mathbb R}^d$ is a quasicrystal. This result is partially
extended to a generalization of the Wigner transform, called
matrix-Wigner transform which is defined in terms of the Wigner
transform and a linear map $T$ on ${\mathbb R}^{2d}$.
\end{abstract}

\section{Introduction}
By a Fourier quasicrystal we mean a tempered distribution
$\mu\in{\mathcal S}^\prime({\mathbb R}^d)$ of the form $\mu =
\sum_{\lambda\in \Lambda}a_\lambda \delta_\lambda$ for which
$\widehat{\mu} = \sum_{s\in S}b_s \delta_s,$ where $\delta_\xi$ is
the mass point at $\xi,$ $\Lambda$ and $S$ are discrete subsets of
${\mathbb R}^d.$ $\Lambda$ and $S$ are called respectively the
support and the spectrum of $\mu.$

The basic examples of Fourier quasicrystals follow from the Poisson
summation formula. N. Lev and A. Olevskii \cite{olevskii} proved
that if a measure $\mu$ on ${\mathbb R}^d$ (which is assumed to be
positive in the case $d > 1$) is a Fourier quasicrystal and both the
support  and the spectrum of $\mu$ are uniformly discrete
 (see Section 2) then there are a lattice $L$ on
${\mathbb R}^d,$ vectors $\theta_j\in {\mathbb R}^d$ and
trigonometric polynomials $P_j$ ($1\leq j\leq N$) such that
$$
\mu = \sum_{j=1}^N P_j(x)\sum_{\lambda\in L+\theta_j}\delta_{\lambda}.$$
\par\medskip
We refer to \cite{kurasov,olevskii2} for examples of quasicrystals
with  other structures. See also \cite{Favorov,olevskii3}.

 The Wigner transform $W(\mu)$ of a tempered distribution $\mu$ gives a description of the time-frequency content of $\mu.$ Hence, it is reasonable to study which information about the structure of $\mu$ follows from the knowledge that $W(\mu)$ is a measure supported on a uniformly discrete set. 
\par\medskip
 The answer to this question does not follow directly from
\cite{olevskii}. Let us assume that $\mu\in{\mathcal
S}^\prime({\mathbb R}^d)$ is an even distribution whose Wigner
transform $W(\mu)$ is a measure on ${\mathbb R}^{2d}$ supported on a
uniformly discrete set. The relation
$W(f)(-\frac{\omega}{2},\frac{x}{2}) = 2^d
\widehat{W(f)}(x,\omega)$ when $f$ is an even square integrable
function (see \cite[4.3,4.2]{grochenig_book}) can be appropriately
extended to even tempered distributions, implying that also the
Fourier transform $\widehat{W(\mu)}$ is a
measure supported on a uniformly discrete set. However, since the Wigner distribution is almost never non negative (see \cite[Theorem 4.4.1]{grochenig_book}) we cannot apply \cite[Theorem 2]{olevskii} to obtain a precise description of $W(\mu).$
\par\medskip
Furthermore, from the fact that $W(\mu)$ is supported on a uniformly
discrete set, we cannot even deduce that the support of $\mu$ is
discrete. This is due to the interaction between the Wigner
distribution and the metaplectic operators. More precisely, to every
symplectic matrix $A\in \mbox{Sp}(2,{\mathbb R})$ one can associate
a unitary operator $T_A$ acting on $L^2({\mathbb R})$ (denoted
$\mu(A)$ in \cite{folland}) such that
    \begin{equation}\label{eq:wigner-metaplectic}
    W\big(T_A f, T_A g\big)(z) = W(f,g)(A^\ast z)\ \ \forall z = (x,\omega)\in {\mathbb R}^2.\end{equation}
Moreover, $T_A$ extends to an isomorphism on ${\mathcal
S}^\prime({\mathbb R})$ and (\ref{eq:wigner-metaplectic}) holds for
$f,g\in {\mathcal S}^\prime({\mathbb R}).$ We refer to
\cite{folland},  Propositions 4.27 and 4.28. Let us now
consider the Dirac comb $\mu=\sum_{n\in {\mathbb Z}}\delta_n.$ When
$$A = \left(\begin{array}{rr}\cos\theta & -\sin\theta\\ \sin\theta &
\cos\theta\end{array}\right), \, \theta \in (-\pi, \pi)$$ $T_A \mu$
is the fractional Fourier transform ${\mathcal F}^\alpha \mu,$ where
$\alpha=\frac{2}{\pi}\theta \in (-2,2).$ In this case $W(T_A\ \mu)$
is a rotation of $W(\mu),$ hence it is supported on a uniformly
discrete set of ${\mathbb R}^2.$ However, $\theta$ can be chosen
such that $\mbox{supp}\ T_A\ \mu ={\mathbb R}$ (see \cite[Theorem
1.2]{viola}).

\par\medskip
Our objective will be to obtain information about $\mu\in{\mathcal
S}^\prime({\mathbb R}^d)$ from the fact that the Wigner transform
$W(\mu)$ is a measure supported on the product of two uniformly
discrete subsets of ${\mathbb R}^d.$  The main result of the paper is as
follows.
\begin{theorem}\label{th:main}
 Let $\mu\in {\mathcal S}^\prime({\mathbb R}^d)$ satisfy
$ W(\mu) = \sum_{(r, s)\in A\times B} c_{r,s} \delta_{(r,s)}$ where $A, B$ are uniformly discrete sets. Then $\mu$ and $\widehat{\mu }$ are measures with supports contained in $A$ and $B$ respectively.
\end{theorem}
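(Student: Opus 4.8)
The plan is to invert the Wigner transform in one variable and read off the structure of $\mu\otimes\overline{\mu}$. Writing $\mathcal{T}F(x,t)=F(x+\tfrac{t}{2},x-\tfrac{t}{2})$ for the asymmetric coordinate change, the definition of the (cross-)Wigner transform gives $W(\mu)=\mathcal{F}_{t\to\omega}\big(\mathcal{T}(\mu\otimes\overline{\mu})\big)$, an identity valid in $\mathcal{S}'(\mathbb{R}^{2d})$. Inverting the partial Fourier transform in the second variable yields $\mathcal{F}^{-1}_{\omega\to t}W(\mu)=\mathcal{T}(\mu\otimes\overline{\mu})$, and substituting the hypothesis $W(\mu)=\sum_{(r,s)\in A\times B}c_{r,s}\delta_{(r,s)}$ produces
\[
\mathcal{T}(\mu\otimes\overline{\mu})=\sum_{r\in A}\delta_r(x)\,g_r(t),\qquad g_r(t)=\sum_{s\in B}c_{r,s}\,e^{2\pi i s\cdot t}.
\]
The crucial feature of the right-hand side is that it has order $0$ in the variable $x$ and is supported in $A\times\mathbb{R}^d$. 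For the support of $\mu$ I note $\supp(\mu\otimes\overline{\mu})=\supp\mu\times\supp\mu$, and the displayed identity forces this set into $\{(u,v):u+v\in 2A\}$ (since $x=(u+v)/2$); taking $v=u$ gives $2u\in 2A$, hence $\supp\mu\subseteq A$.

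The heart of the argument is to upgrade this to the assertion that $\mu$ is a measure, i.e.\ has order $0$. Since $A$ is uniformly discrete with separation $\eta>0$, I write $\mu=\sum_{r\in A}\mu_r$ with $\mu_r=\sum_{|\alpha|\le N_r}c_{r,\alpha}\partial^\alpha\delta_r$ and aim to show each $N_r=0$. The decisive observation is that after the coordinate change the pair $(r,r')$ contributes $\mathcal{T}(\mu_r\otimes\overline{\mu_{r'}})$, supported at the single point $\big((r+r')/2,\,r-r'\big)$ of the $(x,t)$-space, and the map $(r,r')\mapsto\big((r+r')/2,r-r'\big)$ is injective. Localizing in the ball of radius $\eta/2$ about $(r,0)$, uniform discreteness forces $r'-r''=0$ and $(r'+r'')/2=r$, hence $r'=r''=r$: only the diagonal pair is seen there, so near $(r,0)$ the identity reads $\delta_r(x)g_r(t)=\mathcal{T}(\mu_r\otimes\overline{\mu_r})$. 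I would then compute the top-order-in-$x$ part of the right-hand side: with $P_r(\xi)=\sum_{|\alpha|=N_r}c_{r,\alpha}\xi^\alpha$, the substitution $\partial_u,\partial_v\mapsto\tfrac12\partial_x$ (the $\partial_t$-free part of $\partial_u=\tfrac12\partial_x+\partial_t$, $\partial_v=\tfrac12\partial_x-\partial_t$) shows this part equals $2^{-2N_r}Q_r(\partial_x)\delta_r(x)\,\delta_0(t)$ with $Q_r=P_r\overline{P_r}$. As $\mathbb{C}[\xi_1,\dots,\xi_d]$ is an integral domain, $P_r\neq0$ gives $Q_r\neq0$, so the right-hand side has $x$-order exactly $2N_r$; comparing with the order-$0$ left-hand side forces $N_r=0$. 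Thus $\mu_r=c_r\delta_r$ and $\mu=\sum_{r\in A}c_r\delta_r$ is a measure supported in $A$.

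The main obstacle is exactly this order computation, and the reason I would route it through $\mathcal{F}^{-1}_{\omega\to t}W(\mu)=\mathcal{T}(\mu\otimes\overline{\mu})$ rather than work with $W(\mu)=\sum_{r,r'}W(\mu_r,\mu_{r'})$ directly. In the latter form each $W(\mu_r,\mu_{r'})$ is a plane wave $e^{-2\pi i(r-r')\cdot\omega}$ in $\omega$, and infinitely many of these recombine into the point masses $\delta_s$ of the hypothesis, so one must guard against cancellation of the top-order derivatives among the cross terms. Passing to $\mathcal{T}(\mu\otimes\overline{\mu})$ neutralizes this, since there each pair occupies its own point $\big((r+r')/2,r-r'\big)$ and uniform discreteness isolates the diagonal with no interference. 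Finally, the statement for $\widehat{\mu}$ follows by the phase-space symmetry of the Wigner transform: from $W(\mu)(x,\omega)=\int\widehat{\mu}(\omega+\tfrac{\tau}{2})\overline{\widehat{\mu}(\omega-\tfrac{\tau}{2})}\,e^{2\pi i x\cdot\tau}\,d\tau$, i.e.\ $\mathcal{F}_{x\to\tau}W(\mu)=\mathcal{T}(\widehat{\mu}\otimes\overline{\widehat{\mu}})$ with $\omega$ now in the role of the position variable, the identical argument with $B$ in place of $A$ shows that $\widehat{\mu}$ is a measure supported in $B$.
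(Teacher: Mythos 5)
Your proof is correct, and it takes a genuinely different route from the paper's. The paper stays on the Wigner side: it writes $\mu=\sum_{r}\sum_{|\alpha|\le N}a_r^{\alpha}\delta_r^{(\alpha)}$ globally, with uniform order and polynomially bounded coefficients (Lemma \ref{lem:sum_deltas}), expands $\langle W(\mu),\phi_1\otimes\phi_2\rangle$ as an absolutely convergent double series over pairs of support points (Lemma \ref{lem:double-series}), isolates a midpoint $r_0$ by localizing $\phi_1$ and kills lower-order terms by a scaling argument ($\psi(t(x-r_0))$, $t\to\infty$), and then isolates the diagonal pair $r=s$ by a density argument together with a $\phi_2$ whose Fourier transform has small support near the origin; for $d>1$ the surviving bilinear relations $\sum_{(\alpha,\beta)\in F_\gamma^d}a^\alpha\overline{a^\beta}=0$ require a further induction on the dimension (Lemma \ref{lem:several}). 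You instead invert $\mathcal{F}_2$ and work with $\mathcal{T}_s(\mu\otimes\overline{\mu})$ (note that what you call the asymmetric coordinate change is the paper's symmetric one, \eqref{Ts}), where both separations that the paper engineers analytically become automatic geometric facts: each pair $(r',r'')$ of support points occupies its own point $\left(\frac{r'+r''}{2},r'-r''\right)$, so uniform discreteness isolates the diagonal pair near $(r,0)$, and the problem becomes purely local. Your coefficient comparison — the left side, being $\delta_r\otimes g_r$ with $g_r$ necessarily point-supported at $t=0$ there, has no $\partial_x$-derivatives in the (unique) representation of a point-supported distribution, while the top pure-$\partial_x$ part of $\mathcal{T}_s(\mu_r\otimes\overline{\mu_r})$ is $2^{-2N_r}(P_r\overline{P_r})(\partial_x)\delta_{(r,0)}$ — yields $P_r\overline{P_r}=0$, hence $P_r=0$ since $\mathbb{C}[\xi_1,\dots,\xi_d]$ is an integral domain. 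This is a real economy: you need only the classical structure theorem for point-supported distributions instead of Lemma \ref{lem:sum_deltas}, no convergence lemma, and no scaling or density argument; your integral-domain step subsumes Lemma \ref{lem:several}, because you obtain the vanishing of \emph{all} degree-$2N_r$ coefficients of $P_r\overline{P_r}$ rather than only the coefficients of $\xi^{2\gamma}$ that the paper extracts (which is exactly why the paper needs its induction and you do not); and your argument is uniform in $d$, whereas the paper treats $d=1$ and general $d$ separately. Two routine verifications should be made explicit: that for each fixed $r$ the fiber sum $\sum_{s\in B}c_{r,s}\delta_s$ is a tempered distribution, so that $g_r$ is well defined (test $W(\mu)$ against $\phi_1\otimes\phi_2$ with $\phi_1$ a bump isolating $r$), and that $\mathcal{F}_2^{-1}$ commutes with restriction to strips $I\times\mathbb{R}^d$, which justifies your strip-by-strip identification; the latter fact is precisely the first step in the proof of the paper's Lemma \ref{Proj}.
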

The proof is contained in section \ref{sec:proof}. To facilitate the
reading, we have preferred to include the complete proof in
dimension $d=1$ and then indicate the necessary modifications to
obtain the result in arbitrary dimension.

 The usefulness of time-frequency representations in the
study of quasicrystals is not limited to the Wigner transform,
actually in section \ref{sec:matrix} we consider a generalization
of the Wigner transform, called matrix-Wigner transform which is
defined in terms of the Wigner transform and a linear map $T$ on
${\mathbb R}^{2d}$  and contains for particular choices
of $T$ most of the classic time-frequency representations. First we
obtain some results that relate the support of two distributions
$\mu, \nu$ (or $\widehat{\mu}, \widehat{\nu}$) with that of the
cross matrix-Wigner transform $W_T(\mu,\nu)$, thus generalizing the
information contained in Theorem \ref{th:main} relative to the
supports. Then, we focus on the one-dimensional case and obtain a
version of Theorem \ref{th:main} for the cross matrix-Wigner
transform.
We remark moreover that a particular choice of $T$ connects our
framework to that of Lev and Olevskii \cite{olevskii}, which
essentially corresponds to the case of the Rihaczek representation.

We suppose that the link between quasicrystals and
time-frequency analysis presented in this paper could lead to
further developments, both in view of recent results e.g. as in
\cite{olevskii4}, as well as in the direction of using specific
time-frequency representations to enlighten particular features of
quasicrystals structures.

\section{Notation}
\par\medskip\noindent
We use brackets $\langle\mu,g\rangle$ to denote the extension to
${\mathcal S}'({\mathbb R}^d)\times {\mathcal S}({\mathbb R}^d)$ of the inner
product $\begin{displaystyle}\langle f,g\rangle = \int_{{\mathbb R}^d} f(t)
    \overline{g(t)} dt \end{displaystyle}$ on $L^2({\mathbb R}^d).$ We will write $\langle g, \mu\rangle$ instead of $\overline{\langle\mu, g\rangle}.$
\par\medskip
The cross-Wigner distribution of $f, \, g \in L^2({\mathbb R}^d)$ is
$$W(f,g)(x, \omega)=\displaystyle \int_{{\mathbb R}^d}f(x+\frac{t}{2})\overline{g(x-\frac{t}{2})} e^{-2\pi i  \omega t}dt,\ \ x,\omega\in {\mathbb R}^d.$$ It happens that $W(f,g) \in
L^2({\mathbb R}^{2d}).$ Moreover, the cross-Wigner distribution maps ${\mathcal S}({\mathbb R}^d)\times {\mathcal S}({\mathbb R}^d)$ into ${\mathcal S}({\mathbb R}^{2d}).$ The Wigner distribution of $f\in L^2({\mathbb R}^d)$ is $W(f):=W(f,f).$ It is a quadratic representation of the signal $f$ both in time and frequency and it is covariant, which means that
$$
W(T_\alpha M_\beta f)(x,\omega) = W(f)(x-\alpha, \omega-\beta).$$ Here, $T_\alpha$ and $M_\beta$ are the translation and modulation operators, defined by
$$M_{\omega}f(t)= e^{2 \pi \imath \omega t}f(t) \, \, \, {\rm and } \ \
\, T_xf(t)=f(t-x).$$ The cross Wigner distribution can be extended as a continuous map from ${\mathcal S}'({\mathbb R}^d)\times {\mathcal S}'({\mathbb R}^d)$ into ${\mathcal S}'({\mathbb R}^{2d})$  as follows \cite[4.3.3]{grochenig_book}
$$
\langle W(\mu,\nu), \phi\rangle = \langle \mu\otimes \overline{\nu},
{\mathcal T}_s^{-1}{\mathcal F}_2^{-1}\phi\rangle$$ for any $\phi\in
{\mathcal S}({\mathbb R}^{2d}),$ where ${\mathcal F}_2$ denotes the
partial Fourier transform
$$
{\mathcal F}_2F(x, \omega)=\int_{{\mathbb R}^d}F(x,t) e^{-2\pi i
\omega   t}\ dt,\ \ x, \omega \in {\mathbb R}^d
$$
and ${\mathcal T}_s$ is the symmetric coordinate change defined by
\begin{equation}\label{Ts}
{\mathcal T}_sF(x,t)=F(x+\frac{t}{2},x-\frac{t}{2}), \, \, x,t\in
{\mathbb R}^d. \end{equation}
This extension satisfies Moyal's
formula, that is, for any functions $\phi,\psi\in {\mathcal
S}({\mathbb R}^d)$ one has (see for instance
\cite[4.3.2]{grochenig_book})
$$
\langle W(\mu,\nu), W(\phi,\psi)\rangle = \langle\mu, \phi\rangle\cdot \langle\psi,\nu\rangle.$$
\par\medskip\noindent
A set $A\subset {\mathbb R}^d$ is said to be {\it uniformly
discrete} (u.d. from now on) if there is $\delta > 0$ such that
$|r-s|\geq \delta$ whenever $s,r\in A, s\neq r.$

\section{The proof of Theorem \ref{th:main}}\label{sec:proof}
The next result is well-known and will play a role in the proof of Theorem \ref{th:main}. We include a proof for the convenience of the reader. As usual, for a multiindex $\alpha\in {\mathbb N}_0^d$ we denote its length by $|\alpha| = \alpha_1+\ldots+\alpha_d.$ $B_\varepsilon$ stands for the ball with radius $\varepsilon$ centered at the origin.
\begin{lemma}\label{lem:sum_deltas}
Let $\mu\in {\mathcal S}^\prime({\mathbb R}^d)$ be a tempered distribution with u.d. support $A.$ Then there are $N\in {\mathbb N}_0$ and complex numbers $\left\{b^\alpha_r:\ 0\leq |\alpha|\leq N, r\in A\right\}$ such that
$$
\mu = \sum_{r\in A}\sum_{|\alpha|\leq N} b^\alpha_r \delta^{(\alpha)}_r$$ on ${\mathcal S}({\mathbb R}^d).$ Moreover
$$
\sup_{|\alpha|\leq N}\sup_{r\in A}\left|b^\alpha_r\right|\big(1+|r|\big)^{-N} < \infty.$$
\end{lemma}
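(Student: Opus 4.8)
The plan is to combine the classical structure theorem for distributions supported at a single point with the fact that a tempered distribution has finite global order, and then to read off the coefficient growth directly from the order estimate. First I would use uniform discreteness to localize. Fix $\delta>0$ with $|r-s|\ge\delta$ for distinct $r,s\in A,$ and fix a cutoff $\chi\in C_c^\infty(\mathbb{R}^d)$ with $\chi\equiv 1$ on $B_{\delta/4}$ and $\supp\chi\subset B_{\delta/3}.$ For $r\in A$ set $\mu_r:=\chi(\cdot-r)\,\mu.$ Since the ball of radius $\delta/3$ around $r$ meets $A$ only at $r,$ each $\mu_r$ is a distribution with support $\{r\}.$ Writing $\phi=\sum_{r\in A}\chi(\cdot-r)\phi+\psi$ with $\psi:=\big(1-\sum_{r}\chi(\cdot-r)\big)\phi$ vanishing on the neighborhood $\bigcup_{r}B_{\delta/4}(r)$ of $A,$ one gets $\langle\mu,\psi\rangle=0,$ hence $\langle\mu,\phi\rangle=\sum_{r\in A}\langle\mu_r,\phi\rangle$ for every $\phi\in\mathcal{S}(\mathbb{R}^d);$ the admissibility of this interchange is exactly the convergence that the final estimate will guarantee.

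Next comes the key input: because $\mu\in\mathcal{S}^\prime,$ it has finite global order, so there are $N\in\mathbb{N}_0$ and $C>0$ with $|\langle\mu,\phi\rangle|\le C\max_{|\gamma|,|\kappa|\le N}\sup_x|x^\kappa\partial^\gamma\phi(x)|$ for all $\phi.$ Each localized piece $\mu_r$ then has order at most $N,$ and the standard theorem on distributions supported at a point (any distribution of order $\le N$ with support $\{r\}$ is a combination of the $\delta_r^{(\alpha)},$ $|\alpha|\le N$) yields $\mu_r=\sum_{|\alpha|\le N}b_r^\alpha\delta_r^{(\alpha)}.$ Crucially the same bound $N$ serves every $r,$ which is precisely what produces a single $N$ in the statement; summing over $r\in A$ gives the claimed representation.

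It remains to extract and bound the coefficients. Testing $\mu_r$ against $\phi_{r,\beta}(x):=\tfrac{(x-r)^\beta}{\beta!}\chi(x-r)$ and using $\partial^\alpha\big((\cdot-r)^\beta/\beta!\big)(r)=\delta_{\alpha\beta},$ I obtain the extraction formula $b_r^\beta=(-1)^{|\beta|}\langle\mu,\phi_{r,\beta}\rangle.$ Applying the order-$N$ estimate to $\phi_{r,\beta}$ and noting that $\phi_{r,\beta}$ is a translate of the fixed function $\phi_{0,\beta}$ supported in $B_{\delta/3},$ its derivatives are bounded independently of $r,$ while the weight contributes $\sup_{x\in\supp\phi_{r,\beta}}|x^\kappa|\le(|r|+\delta/3)^N\lesssim(1+|r|)^N.$ Since only finitely many $\beta$ with $|\beta|\le N$ occur, this gives $\sup_{|\beta|\le N}\sup_{r\in A}|b_r^\beta|(1+|r|)^{-N}<\infty.$

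Finally, this polynomial bound together with the rapid decay of any $\phi\in\mathcal{S}(\mathbb{R}^d)$ makes $\sum_{r\in A}\sum_{|\alpha|\le N}b_r^\alpha(-1)^{|\alpha|}\phi^{(\alpha)}(r)$ absolutely convergent, which justifies the interchange of the first paragraph and confirms that the representation holds on all of $\mathcal{S}(\mathbb{R}^d).$ The main obstacle is the uniformity of the order $N$ across all points of $A:$ this is what would fail for a general discrete (non-u.d., or non-tempered) situation, and here it is resolved by the finite global order of $\mu$ combined with the separation of $A,$ which lets a single cutoff scale work simultaneously at every point.
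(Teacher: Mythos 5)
Your proof is correct and follows essentially the same route as the paper's: both arguments rest on the finite global order $N$ of a tempered distribution together with the structure theorem for distributions supported at a point (made applicable at each $r\in A$ by uniform discreteness), and both obtain the growth bound $\left|b_r^\alpha\right|\leq C\big(1+|r|\big)^N$ by pairing $\mu$ with translates of a fixed compactly supported bump whose derivatives at the center isolate individual coefficients. The only cosmetic difference is that the paper first writes the representation on ${\mathcal D}({\mathbb R}^d)$ and concludes by density of ${\mathcal D}$ in ${\mathcal S}$, whereas you decompose a Schwartz function directly via translated cutoffs; both are standard ways to finish the same argument.
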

\begin{proof}
Let $N\in {\mathbb N}_0$ and $C > 0$ satisfy
$$
|\mu(f)|\leq C\sup_{x\in {\mathbb R}^d}\sup_{|\alpha|\leq N}\left(1+|x|\right)^N\left|f^{(\alpha)}(x)\right|$$ for every $f\in {\mathcal S}({\mathbb R}^d).$ Since $\mu$ has u.d. support we find complex numbers $\left\{b^\alpha_r:\ |\alpha|\leq N, r\in A\right\}$ such that
\begin{equation}\label{eq:discrete_support}
\mu(\varphi) = \sum_{r\in A}\sum_{|\alpha|\leq N} b^\alpha_r \delta^{(\alpha)}_r(\varphi)\end{equation} for every $\varphi\in {\mathcal D}({\mathbb R}^d).$ Let us check that the right  hand side on (\ref{eq:discrete_support}) defines a tempered distribution. We take $0 < \varepsilon < \inf\{|r-r'|:\ r,r'\in A\}.$ For $|\alpha|\leq N$ we take $\varphi_\alpha\in {\mathcal D}(B_\varepsilon)$ such that $\varphi^{(\alpha)}_\alpha(0) = 1$ and $\varphi^{(\beta)}_\alpha(0) = 0$ for $|\beta|\leq N, \beta\neq \alpha.$ Then
$$
\left|b^\alpha_r\right| = \left|\mu\big(T_r\varphi_\alpha\big)\right| \leq \tilde{C}\big(1+|r|\big)^{N}$$ where the constant $\tilde{C}$ does not depend on $\alpha$ or $r\in A.$ Therefore, the right hand side in (\ref{eq:discrete_support}) defines a tempered distribution. Finally, the density of ${\mathcal D}({\mathbb R}^d)$ on ${\mathcal S}({\mathbb R}^d)$ gives the conclusion.
\end{proof}

\begin{lemma}\label{lem:support}
    Let $\mu\in {\mathcal S}'({\mathbb R}^d)$ satisfy $ W(\mu) = \sum_{(r,
s)\in A\times B} c_{r,s} \delta_{(r,s)}$ where $A, B$ are u.d. sets. Then $\mbox{supp}\ \mu\subset
A$ and $\mbox{supp}\ \widehat{\mu}\subset B.$ Moreover,
$\frac{r_1+r_2}{2}\in A$ for any $r_1, r_2\in \mbox{supp}\ \mu.$
\end{lemma}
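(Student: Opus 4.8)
The plan is to invert the relation between $W(\mu)$ and the tensor product $\mu\otimes\overline\mu$, and then read off the supports geometrically. Recall from the extension formula in Section 2 that $\langle W(\mu),\phi\rangle=\langle\mu\otimes\overline\mu,\mathcal{T}_s^{-1}\mathcal{F}_2^{-1}\phi\rangle$ for every $\phi\in\mathcal{S}({\mathbb R}^{2d})$. Since $\mathcal{T}_s$ and $\mathcal{F}_2$ are isomorphisms of $\mathcal{S}({\mathbb R}^{2d})$, given any $\Phi\in{\mathcal D}({\mathbb R}^{2d})$ I would take $\phi=\mathcal{F}_2\mathcal{T}_s\Phi$, which satisfies $\mathcal{T}_s^{-1}\mathcal{F}_2^{-1}\phi=\Phi$, and obtain $\langle\mu\otimes\overline\mu,\Phi\rangle=\langle W(\mu),\mathcal{F}_2\mathcal{T}_s\Phi\rangle$. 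Substituting $W(\mu)=\sum_{(r,s)\in A\times B}c_{r,s}\delta_{(r,s)}$ turns the right-hand side into $\sum_{(r,s)}c_{r,s}\,(\mathcal{F}_2\mathcal{T}_s\Phi)(r,s)$, and unwinding the definitions gives $(\mathcal{F}_2\mathcal{T}_s\Phi)(r,s)=\int_{{\mathbb R}^d}\Phi\bigl(r+\tfrac{t}{2},\,r-\tfrac{t}{2}\bigr)e^{-2\pi i s t}\,dt$.

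The key observation is that along the slice $u=r+\tfrac{t}{2},\ v=r-\tfrac{t}{2}$ one has $\tfrac{u+v}{2}=r$ identically in $t$. Hence, if $\Phi$ vanishes on the set $\{(u,v):\tfrac{u+v}{2}\in A\}$, then every integrand $\Phi(r+\tfrac{t}{2},r-\tfrac{t}{2})$ vanishes (since $r\in A$), so each term, and therefore $\langle\mu\otimes\overline\mu,\Phi\rangle$, is zero. This shows $\supp(\mu\otimes\overline\mu)\subset\{(u,v):\tfrac{u+v}{2}\in A\}$. Using the standard identity $\supp(\mu\otimes\overline\mu)=\supp\mu\times\supp\mu$, I conclude that $\tfrac{r_1+r_2}{2}\in A$ for all $r_1,r_2\in\supp\mu$, which is the final assertion of the lemma; taking $r_1=r_2$ yields $\supp\mu\subset A$.

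For the spectrum I would reduce to the statement already proved rather than repeat the computation. The cross-Wigner distribution satisfies $W(\widehat{f})(x,\omega)=W(f)(-\omega,x)$, and since both the Wigner transform and the Fourier transform are continuous on $\mathcal{S}'({\mathbb R}^d)$ while the identity holds on the dense subspace $\mathcal{S}({\mathbb R}^d)$, it extends to $\mu$. Consequently $W(\widehat{\mu})$ is a measure supported on $\{(x,\omega):(-\omega,x)\in A\times B\}=B\times(-A)$, again a product of two u.d.\ sets whose \emph{first} factor is $B$. Applying the part already established to $\widehat{\mu}$ then gives $\supp\widehat{\mu}\subset B$.

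Once the inversion is set up the argument is largely bookkeeping, and the step needing the most care is the first one: checking that $\phi=\mathcal{F}_2\mathcal{T}_s\Phi$ is a genuine Schwartz function so that the duality pairing with $W(\mu)$ is legitimate, and that the interchange of $\langle W(\mu),\cdot\rangle$ with the sum over $A\times B$ is justified by the temperedness of $W(\mu)$ (the $c_{r,s}$ being polynomially bounded, as in Lemma \ref{lem:sum_deltas}). The conceptual crux is the clean geometric fact that the Wigner ``diagonal'' $t\mapsto(r+\tfrac{t}{2},r-\tfrac{t}{2})$ projects to the single point $r$ under $(u,v)\mapsto\tfrac{u+v}{2}$, which is exactly what collapses the support of $\mu\otimes\overline\mu$, and hence of $\mu$, onto $A$.
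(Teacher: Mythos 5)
Your proof is correct, but it follows a genuinely different route from the paper's. The paper treats the two assertions separately: the inclusions $\supp\mu\subset A$ and $\supp\widehat\mu\subset B$ are dismissed as ``standard properties of the Wigner transform'', and the midpoint property is proved by contradiction --- after translating so that $r_1$ becomes $0$, the distribution $\nu=T_{-r_1}\mu$ is decomposed via Lemma \ref{lem:sum_deltas} into a finite-order sum of derivatives of deltas, and Moyal's formula $\langle W(\nu),W(f_\alpha,g)\rangle=\langle g,\nu\rangle\,\langle\nu,f_\alpha\rangle$ is applied with bumps $f_\alpha$ near $r_0=r_2-r_1$ and $g$ near $0$; each term $\int e^{2\pi i ts}f_\alpha(r+\frac t2)g(r-\frac t2)\,dt$ vanishes because the integrand forces $r$ into $(\frac{r_0}{2}+B_\varepsilon)\cap A_1=\emptyset$, which kills every coefficient of $\nu$ at $r_0$ and contradicts $r_0\in\supp\nu$. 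You instead invert the defining relation $W(\mu)={\mathcal F}_2{\mathcal T}_s(\mu\otimes\overline\mu)$ and test $\mu\otimes\overline\mu$ against arbitrary $\Phi\in{\mathcal D}({\mathbb R}^{2d})$ vanishing on $\{(u,v):\frac{u+v}{2}\in A\}$, obtaining in one stroke the stronger statement $\supp(\mu\otimes\overline\mu)\subset\{(u,v):\frac{u+v}{2}\in A\}$; combined with $\supp(\mu\otimes\overline\mu)=\supp\mu\times\supp\mu$ this gives the midpoint property, and the specialization $r_1=r_2$ gives the inclusion $\supp\mu\subset A$ that the paper leaves to the reader. The geometric kernel is identical in both arguments --- the slice $t\mapsto(r+\frac t2,r-\frac t2)$ projects onto the single midpoint $r$ --- and in fact the paper's test functions $W(f_\alpha,g)={\mathcal F}_2{\mathcal T}_s(f_\alpha\otimes\bar g)$ are a special case of your ${\mathcal F}_2{\mathcal T}_s\Phi$. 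What your route buys: no Moyal formula, no contradiction, no need of Lemma \ref{lem:sum_deltas} to decompose $\mu$ itself; you only need termwise evaluation of the series defining $W(\mu)$ against the Schwartz function ${\mathcal F}_2{\mathcal T}_s\Phi$, and since every single term vanishes, even conditional convergence of the series in ${\mathcal S}'({\mathbb R}^{2d})$ suffices (your appeal to polynomial bounds on the $c_{r,s}$ is a valid, if slightly stronger than necessary, justification). The Fourier-side conclusion via $W(\widehat\mu)(x,\omega)=W(\mu)(-\omega,x)$ is the same device the paper uses at the end of the proof of Theorem \ref{th:main}. Two cosmetic points: with the paper's sesquilinear pairing the termwise evaluation reads $\sum c_{r,s}\overline{\phi(r,s)}$ rather than $\sum c_{r,s}\phi(r,s)$ (immaterial, since vanishing is preserved under conjugation), and you should record that $\{(u,v):\frac{u+v}{2}\in A\}$ is closed (because a u.d.\ set $A$ is closed), so that vanishing of $\Phi$ on this set is exactly what is needed to bound $\supp(\mu\otimes\overline\mu)$.
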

\begin{proof}
The inclusions $\mbox{supp}\ \mu\subset A$ and $\mbox{supp}\ \widehat{\mu}\subset B$ follow from standard properties of the Wigner transform. Let us now fix $r_1, r_2\in \mbox{supp}\ \mu$ and consider $\nu:=T_{-r_1}\mu,$ so that $0, r_0:=r_2-r_1\in \mbox{supp}\ \nu.$ From the covariance property of the Wigner transform we obtain a representation
$$
 W(\nu) = \sum_{(r, s)\in A_1\times B} b_{r,s} \delta_{(r,s)}$$ where $A_1 = A-r_1.$ Since $\nu$ is a tempered distribution with u.d. support contained in $A_1$ we have
 $$
 \nu = \sum_{r\in A_1}\sum_{|\alpha|\leq N} b_r^\alpha \delta_r^{(\alpha)}$$ for some $N\in {\mathbb N}$ and $b_r^\alpha\in {\mathbb C}.$ We aim to prove that $\frac{r_0}{2}\in A_1,$ which means $\frac{r_1+r_2}{2}\in A$ as desired. Proceeding by contradiction, let us assume that $\frac{r_0}{2}\notin A_1.$ We choose $0 < \varepsilon < \mbox{dist}\left(\frac{r_0}{2},A_1\right).$ Since $0\in \mbox{supp}\ \nu$ we can find $g\in {\mathcal D}(B_\varepsilon)$ real-valued and satisfying $\langle \nu, g\rangle \neq 0.$ For every $\alpha\in {\mathbb N}_0^N$ with $|\alpha|\leq N$ let $f_\alpha\in {\mathcal D}(r_0+B_\varepsilon)$ be a real-valued function such that $f_\alpha^{(\alpha)}(r_0) = (-1)^{|\alpha|}$ but $f_\alpha^{(\beta)}(r_0) = 0$ for any $\beta\neq \alpha.$ Now we observe that
 $$
 \begin{array}{*2{>{\displaystyle}l}}
    \langle g, \nu\rangle \ b_{r_0}^\alpha & = \langle g, \nu\rangle \cdot \langle\nu, f_\alpha\rangle = \langle W(\nu), W(f_\alpha,g)\rangle \\ & \\ & = \sum_{(r, s)\in A_1\times B} b_{r,s}\int_{{\mathbb R}^d}e^{2\pi its}f_\alpha(r+\frac{t}{2})g(r-\frac{t}{2})\ dt.

 \end{array}
 $$ Now,
 $$
 f_\alpha(r+\frac{t}{2})g(r-\frac{t}{2})\neq 0$$ implies that $$ r+\frac{t}{2}\in r_0+B_\varepsilon\mbox{ and } r-\frac{t}{2}\in B_\varepsilon,$$ and hence $$r \in (\frac{r_0}{2}+B_\varepsilon) \cap A = \emptyset.$$ We conclude $b_{r_0}^\alpha = 0$ for every $|\alpha|\leq N,$ which is a contradiction since $r_0\in \mbox{supp}\ \nu.$
 \end{proof}
\par\medskip
\begin{remark}{\rm
Under the hypothesis of Lemma \ref{lem:support} the set $$\frac{\mbox{supp}\ \mu+\mbox{supp}\ \mu}{2}$$ is u.d.. We note however that there are u.d. sets $A$ such that $\frac{A+A}{2}$ has accumulation points. As an example in dimension $d=1$ we can consider $A = \{n+\frac{1}{|n|}:\ n\in {\mathbb Z}\setminus\{0\}\},$ for which $0$ is an accumulation point of $\frac{A+A}{2}.$
} $\Box$
\end{remark}
\par\medskip
The next elementary result will be used in the proof of Theorem \ref{th:main}. We omit the proof.

\begin{lemma}\label{lem:double-series}
Let $A\subset {\mathbb R}^d$ be a u.d. set. Then for every $\alpha > 0$ there exists $\beta > 0$ such that$$
\sum_{r\in A}\sum_{s\in A}\left(1+|r|\right)^{\alpha}\left(1+|s|\right)^{\alpha}\left(1+|r+s|\right)^{-\beta}\left(1+|r-s|\right)^{-\beta} < \infty.$$
\end{lemma}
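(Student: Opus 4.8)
The plan is to reduce everything to a single counting estimate for uniformly discrete sets. First I would record the standard packing fact: if $A$ is u.d. with separation $\delta$, then the number of points of $A$ in any ball $B_R(x)$ is at most $C(1+R)^d$. This follows by noting that the balls $B_{\delta/2}(a)$, $a\in A$, are pairwise disjoint, so at most $\big((2R/\delta)+1\big)^d$ of them can fit inside $B_{R+\delta/2}(x)$. This polynomial control on cardinalities is the only structural input I will need from uniform discreteness.

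Next I would dispose of the growth factors $(1+|r|)^{\alpha}(1+|s|)^{\alpha}$ by re-expressing everything through the two quantities $|r+s|$ and $|r-s|$. The elementary inequality $1+|r|\le (1+|r+s|)(1+|r-s|)$ holds because $|r|\le\tfrac12(|r+s|+|r-s|)$, and the symmetric statement holds for $s$. Raising to the power $\alpha$ and multiplying gives $(1+|r|)^{\alpha}(1+|s|)^{\alpha}\le (1+|r+s|)^{2\alpha}(1+|r-s|)^{2\alpha}$, so each summand is dominated by $(1+|r+s|)^{2\alpha-\beta}(1+|r-s|)^{2\alpha-\beta}$. From now on the weight depends only on $|r+s|$ and $|r-s|$.

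Then I would organise the double sum dyadically. For integers $k,j\ge 0$ set
\[
E_{k,j}=\big\{(r,s)\in A\times A:\ 2^{k}\le 1+|r+s|<2^{k+1},\ 2^{j}\le 1+|r-s|<2^{j+1}\big\},
\]
so that $A\times A$ is the disjoint union of the $E_{k,j}$. On $E_{k,j}$ both $|r|$ and $|s|$ are at most $2^{k}+2^{j}$, again via $|r|,|s|\le\tfrac12(|r+s|+|r-s|)$, so the packing bound yields $\#E_{k,j}\le C^{2}(1+2^{k}+2^{j})^{2d}\le C'\,2^{2d(k+j)}$. Assuming $\beta\ge 2\alpha$, each summand on $E_{k,j}$ is at most $2^{(2\alpha-\beta)(k+j)}$, since $1+|r+s|\ge 2^{k}$ and $1+|r-s|\ge 2^{j}$. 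Summing over each block and then over $k,j$ bounds the whole sum by $C'\big(\sum_{k\ge 0}2^{(2d+2\alpha-\beta)k}\big)^{2}$.

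Finally I would choose $\beta$. This last geometric series converges precisely when $2d+2\alpha-\beta<0$, so any $\beta>2\alpha+2d$ does the job. The one point that genuinely requires care, and what I expect to be the \emph{hard part}, is the bookkeeping that keeps the two competing exponents balanced: one must verify that the polynomial growth $2^{2d(k+j)}$ in the cardinality of $E_{k,j}$ is strictly beaten by the decay $2^{(2\alpha-\beta)(k+j)}$ of the weight, which is exactly what taking $\beta$ large enough guarantees. Everything else is routine.
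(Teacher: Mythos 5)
Your proof is correct. Note that the paper itself \emph{omits} the proof of this lemma (it is introduced as ``elementary'' and used only in the proof of Theorem \ref{th:main}), so there is no argument of the authors to compare against; your write-up is a complete, self-contained verification. Each step checks out: the packing bound $\#\bigl(A\cap B_R(x)\bigr)\le C(1+R)^d$ is the standard volume argument; the inequality $1+|r|\le (1+|r+s|)(1+|r-s|)$ follows from $2|r|\le |r+s|+|r-s|$ (and symmetrically for $s$); the dyadic blocks $E_{k,j}$ satisfy $\#E_{k,j}\le C'\,2^{2d(k+j)}$ because both $|r|$ and $|s|$ are at most $2^k+2^j$ there; and with $\beta>2\alpha+2d$ (which in particular gives the $\beta\ge 2\alpha$ needed to replace $1+|r\pm s|$ by the lower endpoints $2^k,2^j$ in a decreasing power) the resulting double geometric series converges. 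The only cosmetic remark is that the ``hard part'' you flag at the end is already fully handled by your own block estimate, so the closing paragraph could simply state the choice of $\beta$ and stop.
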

\par\vskip 1cm
{\it Proof of Theorem \ref{th:main} in dimension $d=1$:}

Let  us write $D$ for the support of $\mu,$ which is contained in
$A.$ According to Lemma \ref{lem:sum_deltas} we can put
$$\mu=\sum_{r\in D}\sum_{j=0}^N a_r^j \delta^{(j)}_r,$$ with $a_r^j\in {\mathbb C}.$ We now assume $N\geq 1$ and show that $a_r^N = 0$ for all $r\in D.$
\par\medskip
For any real-valued functions $\phi_1, \phi_2 \in {\mathcal
S}({\mathbb R})$ we have, for $\phi = \phi_1\otimes\phi_2,$
$$
\begin{array}{*2{>{\displaystyle}l}}
    \left({\mathcal T}_s^{-1}{\mathcal F}_2^{-1}\phi\right)(u,v) & = \left({\mathcal F}_2^{-1}\phi\right)\left(\frac{u+v}{2},u-v\right) \\ & \\ & = \phi_1\left(\frac{u+v}{2}\right)\widehat{\phi_2}(v-u),
    \end{array}$$
    hence
$$
\langle W(\mu), \phi_1\otimes\phi_2\rangle = \langle \mu_u, \langle
\mu_v,
\phi_1\left(\frac{u+v}{2}\right)\overline{\widehat{\phi_2}\left(v-u\right)}\rangle\rangle.$$

A simple calculation gives
\begin{equation}\label{eq:wigner}
\begin{array}{*1{>{\displaystyle}c}} \langle W(\mu), \phi_1\otimes \phi_2 \rangle =\\ \\ \sum_{j, k =0}^N
\sum_{\ell=0}^j\sum_{m=0}^k (-1)^{j+k}\lambda_{j,k}^{\ell,m}\sum_{r,
\, s \in D} a^k_{s}\overline a^j_r\phi_1^{(\ell+m)}\left(\frac{r+s}{2}\right)\overline{\widehat{\phi_2}}^{(j+k-\ell-m)}(r-s),
\end{array}\end{equation}
where
$$\lambda_{j,k}^{\ell,m} = {j \choose \ell} {k \choose m} (-1)^{k-m} \frac{1}{2^{m+\ell}}.$$
\par\medskip
Since
\begin{equation}\label{eq:bound-coeff}
\sup_{r\in D}\left|a^j_r\right|\left(1+|r|\right)^{-N} < \infty
\end{equation}
for every $0\leq j\leq N,$ an application of Lemma
\ref{lem:double-series} permits to conclude that the double series
in the right hand side of (\ref{eq:wigner}) is absolutely
convergent. We take $$0 < \varepsilon < \delta(A):= \inf\{|r -
r^\prime|:\ r, r^\prime\in A,\ r\neq r^\prime\}.$$ By Lemma
\ref{lem:support}, $\frac{r+s}{2}\in A$ for any $r, s \in D,$
therefore $\{\frac{r+s}{2}: r, \, s \in D \}$ has no accumulation
points. Fix $r_0 \in D$ and choose $\phi_1\in {\mathcal
D}(r_0-\varepsilon, r_0+\varepsilon)$ such that
$\phi_1^{(n)}(r_0)=0$ for $n=0, \dots , 2N-1$ whereas
$\phi^{(2N)}_1(r_0)\neq 0.$ Then, for $\phi_2 \in {\mathcal
S}({\mathbb R})$ (real valued), we have

$$\langle W(\mu), \phi_1
\otimes \phi_2 \rangle = \frac{1}{2^{2N}}\phi_1^{(2N)}(r_0)\sum_{r,
\, s \in D(r_0)}\overline{a^N_r} a^N_s \overline{\widehat{\phi_2}}(r-s).$$

Here $D(r_0):=\{(r, s):\ r,s\in D;\ r+s=2r_0\}.$ Hence, if $\phi_2$ has also compact support,
$$
\begin{array}{*2{>{\displaystyle}l}}
    \left| \phi_1^{(2N)}(r_0)\sum_{(r, s) \in D(r_0)}\overline{a^N_r}a^N_s \overline{\widehat{\phi_2}}(r-s)
\right | & = 2^{2N}\left| \langle W(\mu), \phi_1\otimes \phi_2
\rangle \right|\\ & \\ & \leq C ||\phi_1||_\infty
||\phi_2||_\infty,\end{array}$$ where the constant $C$ depends on
the (compact) support of $\phi_1\otimes \phi_2.$

Next, we fix  $\psi \in {\mathcal D}(-\varepsilon, \varepsilon)$
such that $\psi^{(n)}(0)=0$ for $n=0, \dots , 2N-1$ and
$\psi^{(2N)}(0)=1,$ and for each $t\geq 1,$ let us consider $\psi^t(
x):=\psi(t x)$ and $\phi^t_1(x)=\psi^t(x-r_0).$
Hence, as the
supports of  the $\phi^t_1$'s shrink as $t$ increases, we have, for
every $t \geq 1,$
$$
t^{2N}\left| \sum_{(r, s) \in D(r_0)}\overline{a^N_r}
a^N_s \overline{\widehat{\phi_2}}(r-s)\right|\leq C
||\psi||_\infty ||\phi_2||_\infty,$$ where $C$ depends on the
support of $\phi_1^1\otimes \phi_2.$ Taking limits as $t$ goes to
infinity we conclude that
\begin{equation}\label{eq:sum-0}
\sum_{(r, s) \in D(r_0)}\overline{a^N_r} a^N_s
\overline{\widehat{\phi}_2}(r-s) = \sum_{r
\in D_0}\overline{a^N_r} a^N_{2r_0-r}
\overline{\widehat{\phi}_2}(2(r-r_0)) = 0
\end{equation}
for every $\phi_2\in {\mathcal D}({\mathbb R}),$ where
$$
D_0=\{r\in D : \text{there\ exists\ }s\in D\text{\ with\ }r+s=2r_0\}.$$ From (\ref{eq:bound-coeff}) it follows that the map
$$
\phi \mapsto \sum_{(r, s) \in D(r_0)}\overline{a^N_r}
a^N_s \overline{\widehat{\phi}}(r-s)$$ defines a
tempered distribution, which coincides with $$ \sum_{(r, s) \in
    D(r_0)}\overline{a^N_r} a^N_s e^{-2\pi
    i(r-s)x}={\mathcal F}\left(\sum_{(r, s) \in
    D(r_0)}\overline{a^N_r} a^N_s \delta_{r-s}\right)$$ the
series being convergent in ${\mathcal S}'({\mathbb R}).$ Hence, by density, equation (\ref{eq:sum-0})
holds for every $\phi_2\in {\mathcal S}({\mathbb R}).$ Now we consider $\phi_2\in {\mathcal S}({\mathbb R})$ such that $\rm supp\
\widehat \phi_2$ is a so small compact set that it does not contain
other points of $(D-r_0)$ other than possibly $0$, a fact which is
possible since $D \subset A$ has no accumulation points. Then equation (\ref{eq:sum-0}) reduces to $\overline{a^N_{r_0}}a^N_{r_0}=0$, i.e. $a^N_{r_0}=0$.
\par\medskip
Proceeding by recurrence, we finally get that $a^j_r=0$ for all $r\in D$ whenever $j\geq 1.$ This proves that $\mu$ is a measure, as desired.
\par\medskip The conclusion for $\widehat{\mu}$ now follows from $W(\widehat{\mu})(x,\omega) = W(\mu)(-\omega,x)$ (see \cite[4.3.2]{grochenig_book}). $\Box$
\par\medskip
Let $A, B$ u.d. subsets of ${\mathbb R}.$ It is well-known that $$T:= \sum_{(r,s)\in A\times B}c_{r,s}\delta_{(r,s)}$$ defines a tempered distribution if, and only if, for some $N\geq 0$ $$
\sup_{(r,s)\in A\times B}|c_{r,s}|\left(1+|r|+|s|\right)^{-N} < +\infty.$$

If $T$ is obtained as the Wigner transform of a tempered distribution then a more restrictive condition on the coefficients is satisfied.

\begin{corollary}\label{cor:bounded} Let $\mu\in {\mathcal S}^\prime({\mathbb R})$ satisfy
    $ W(\mu) = \sum_{(r, s)\in A\times B} c_{r,s} \delta_{(r,s)}$ where $A, B$ are u.d. sets. Then
    $$
    \sup_{(r,s)\in A\times B}|c_{r,s}| < +\infty.$$
\end{corollary}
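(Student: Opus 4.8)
The plan is to read the bound off Theorem~\ref{th:main} in two stages, the genuine difficulty being the passage from the \emph{polynomial} growth that temperedness always allows (as recalled just before the statement) to \emph{uniform} boundedness. By Theorem~\ref{th:main} I may write $\mu=\sum_{r\in A}a_r\delta_r$ and $\widehat\mu=\sum_{s\in B}b_s\delta_s$, both measures, with $a_r,b_s$ growing at most polynomially by Lemma~\ref{lem:sum_deltas}.

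The first stage is to upgrade this to \emph{translation boundedness} of $\mu$, i.e. $\sup_r|a_r|<\infty$. Fix $\psi_0\in\mathcal D(\mathbb R)$ supported in an interval shorter than the separation $\delta(A)$ of $A$, with $\psi_0(0)=1$, and for $r_0\in A$ set $\psi=T_{r_0}\psi_0$. Since $\psi$ meets $\operatorname{supp}\mu$ only at $r_0$ we have $\psi\mu=a_{r_0}\delta_{r_0}$, so taking Fourier transforms (which yields an entire function, $\psi\mu$ being compactly supported) and evaluating at the origin gives
$$a_{r_0}=\widehat{\psi\mu}(0)=(\widehat\psi\ast\widehat\mu)(0)=\sum_{s\in B}b_s\,e^{2\pi i r_0 s}\,\widehat{\psi_0}(-s).$$
Because $|\widehat\psi(-s)|=|\widehat{\psi_0}(s)|$ does not depend on $r_0$ and $(b_s)$ is polynomially bounded over the u.d. set $B$, the series $\sum_s|b_s|\,|\widehat{\psi_0}(s)|$ converges to a finite constant $K$, whence $\sup_r|a_r|\le K<\infty$.

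The second stage transfers this to the $c_{r,s}$. Specializing the identity~(\ref{eq:wigner}) to $N=0$ (now legitimate, as $\mu$ is a measure) and localizing the first factor $\phi_1$ about a point $p\in A$ — possible since $\{\tfrac{r+s}{2}:r,s\in\operatorname{supp}\mu\}\subset A$ is u.d. by Lemma~\ref{lem:support} — I obtain, for the slice $g_p:=\sum_{q\in B}c_{p,q}\delta_q$, a Poisson-type identity
$$\widehat{g_p}=\sum_{\substack{r,s\in\operatorname{supp}\mu\\ r+s=2p}}a_s\overline{a_r}\,\delta_{r-s}.$$
Once $r+s=2p$ is fixed the frequencies $r-s=2(r-p)$ form a uniformly discrete set of separation $\ge 2\delta(A)$ (inherited from $A$), while the masses $a_s\overline{a_r}$ are now bounded by $K^2$ thanks to the first stage. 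Applying to $g_p$ the very same localization-and-Fourier argument, with a bump $\chi_0$ adapted to the separation $\delta(B)$ of $B$, gives $c_{p,q_0}=\sum_{d}a_s\overline{a_r}\,e^{2\pi i q_0 d}\,\widehat{\chi_0}(-d)$ with $d=r-s$, and hence
$$|c_{p,q_0}|\le K^2\sum_{d}|\widehat{\chi_0}(d)|\le K^2\,C',$$
where $C'$ bounds $\sum|\widehat{\chi_0}|$ over any u.d. set of separation $\ge 2\delta(A)$ and is therefore independent of $p$ and $q_0$. This is the desired $\sup_{(r,s)}|c_{r,s}|<\infty$.

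I expect the main obstacle to be exactly this polynomial-to-uniform step. Temperedness alone never excludes polynomial growth of the masses, and the tempting shortcut of reading $c_{p,q}$ off the spectrogram identity $|V_g\mu|^2=W(\mu)\ast W(g)$ fails, because the off-diagonal tail in that convolution is only polynomially small in $(p,q)$, not uniformly small. What breaks the impasse is the elementary device above: multiplying a measure with u.d. support by a narrow bump isolates a single mass, whose value is then written through the \emph{Schwartz-summable} data on the Fourier side. The quasicrystal symmetry between $\mu$ and $\widehat\mu$ furnished by Theorem~\ref{th:main} is precisely what makes that Fourier side a measure on a uniformly discrete set — first for $\mu$ itself, and then, after fixing the midpoint $p$, for each slice $g_p$.
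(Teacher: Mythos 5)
Your proof is correct, but it takes a genuinely different route from the paper's. The paper deduces the bound from the Lev--Olevskii structure theorems: since Theorem \ref{th:main} makes $\mu$ and $\widehat{\mu}$ measures with u.d.\ supports, \cite[Theorems 1,3]{olevskii} apply and give $\mu=\sum_{j=1}^N\mu_j$ with each $\mu_j$ a finite linear combination of time-frequency shifts of a Dirac comb $\sum_n\delta_{na}$; the coefficients of $W(\mu)$ are then read off from sesquilinearity, covariance, and the explicit bounded-coefficient formula for the Wigner transform of a Dirac comb. You instead stay inside the paper's own toolbox: stage one is the translation-boundedness of quasicrystal measures --- isolating one mass with a narrow bump and writing it through $\widehat{\psi\mu}=\widehat{\psi}\ast\widehat{\mu}$, so that the polynomially bounded masses of $\widehat{\mu}$ (Lemma \ref{lem:sum_deltas} applied to $\widehat{\mu}$) summed against the Schwartz decay of $\widehat{\psi_0}$ over the u.d.\ set $B$ give $\sup_r|a_r|\le K$ --- and stage two repeats the same device on each slice $g_p=\sum_q c_{p,q}\delta_q$, whose Fourier transform, by the $N=0$ case of (\ref{eq:wigner}) together with the midpoint property of Lemma \ref{lem:support}, is a measure whose masses $a_s\overline{a_r}$ are bounded by $K^2$ and sit on a u.d.\ set of separation at least $2\delta(A)$ (each frequency $r-s=2(r-p)$ carries exactly one pair), uniformly in $p$. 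Modulo routine bookkeeping --- absolute convergence via Lemma \ref{lem:double-series}, extension of the identity from real compactly supported $\phi_2$ to all of ${\mathcal S}({\mathbb R})$ by the same density argument as in the paper, and conjugation/sign conventions that do not affect moduli --- every step checks out. The trade-off: the paper's proof is shorter and exposes the full lattice structure behind $W(\mu)$ (compare Corollary \ref{cor:structure}), but it leans on a deep external theorem whose version for non-positive measures is specific to dimension one; your argument is elementary and self-contained, and since the $d$-dimensional analogue of (\ref{eq:wigner}) with $N=0$ and Lemma \ref{lem:support} hold in every dimension, it would in fact prove the corollary for arbitrary $d$, where the paper's route is unavailable.
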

\begin{proof}
According to Theorem \ref{th:main} we can apply \cite[Theorems 1,3]{olevskii} to conclude that there are $a > 0$ and $N\in {\mathbb N}$ such that $\mu = \sum_{j=1}^N \mu_j,$ where each $\mu_j$ is a finite linear combination of time-frequency shifts of $\sum_{n\in {\mathbb Z}}\delta_{na}.$ The conclusion follows from the properties of the cross-Wigner distribution (see for instance \cite[4.3.2(c)]{grochenig_book} in the $L^2$ setting) and the fact that, for $\Lambda = a{\mathbb Z},$
$$
W(\sum_{\lambda\in \Lambda}\delta_{\lambda}) = \frac{1}{2a}\sum_{\lambda\in \Lambda}\delta_{\lambda}\otimes \sum_{\lambda^\ast\in \Lambda^\ast}\delta_{\frac{\lambda^\ast}{2}} + \frac{1}{2a}\sum_{\lambda\in \Lambda}\delta_{\lambda+\frac{a}{2}}\otimes \sum_{\lambda^\ast\in \Lambda^\ast}e^{-\pi i \lambda^\ast}\delta_{\frac{\lambda^\ast}{2}},$$ where $\Lambda^\ast = \frac{1}{a}{\mathbb Z}$ is the dual lattice.
\end{proof}
 Our next goal is to adapt the previous arguments to
the case of arbitrary dimension $d$. We first need a technical
lemma. For any $\gamma\in {\mathbb N}_0^d$ we denote
$$
F_{\gamma}^d = \left\{(\alpha, \beta)\in {\mathbb N}_0^d\times {\mathbb N}_0^d:\ |\alpha| = |\beta| = |\gamma|,\ \alpha + \beta = 2\gamma\right\}.$$

\begin{lemma}\label{lem:several}
    Let $N\geq 1$ and let us assume that the family
     of complex numbers $\left\{a^\gamma\right\}$ indexed by  $\gamma\in {\mathbb N}_0^d$ satisfies
    \begin{equation}\label{eq:condition}
        \sum_{(\alpha, \beta)\in F_{\gamma}^d} a^\alpha \overline{a^\beta} = 0\end{equation} for every $\gamma\in {\mathbb N}_0^d$ with $|\gamma|=N.$ Then $a^\gamma = 0$ whenever $|\gamma|=N.$
\end{lemma}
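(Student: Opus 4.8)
The plan is to encode the family $\{a^\gamma : |\gamma| = N\}$ into a single homogeneous polynomial and reduce the statement to the elementary fact that a polynomial vanishing identically on $\mathbb{R}^d$ has all coefficients zero. Concretely, I would set
$$P(z) = \sum_{|\alpha| = N} a^\alpha z^\alpha, \qquad z = (z_1, \dots, z_d) \in \mathbb{R}^d,$$
a homogeneous polynomial of degree $N$, and aim to prove $P \equiv 0$, which is exactly the desired conclusion $a^\gamma = 0$ for every $|\gamma| = N$.

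The key observation is that the hypothesis (\ref{eq:condition}) describes precisely the \emph{even part} of $|P|^2$. Indeed, for real $z$ one has $\overline{P(z)} = \sum_{|\beta|=N} \overline{a^\beta}\, z^\beta$, so that
$$|P(z)|^2 = \sum_{|\alpha|=|\beta|=N} a^\alpha \overline{a^\beta}\, z^{\alpha+\beta}.$$
A monomial $z^{\alpha+\beta}$ has all its exponents even exactly when $\alpha + \beta = 2\gamma$ for some $\gamma$, in which case necessarily $|\gamma| = N$; thus the pairs contributing to such a monomial are precisely those in $F_\gamma^d$. The standard device to isolate this even part is averaging over sign changes: writing $\epsilon z := (\epsilon_1 z_1, \dots, \epsilon_d z_d)$ for a sign vector $\epsilon \in \{-1,1\}^d$, one has $\frac{1}{2^d}\sum_\epsilon \epsilon^{\alpha+\beta} = 1$ when $\alpha + \beta$ is componentwise even and $0$ otherwise, whence
$$\frac{1}{2^d}\sum_{\epsilon \in \{-1,1\}^d} |P(\epsilon z)|^2 = \sum_{|\gamma|=N}\Big(\sum_{(\alpha,\beta) \in F_\gamma^d} a^\alpha \overline{a^\beta}\Big) z^{2\gamma} = 0$$
for every $z \in \mathbb{R}^d$, the last equality being exactly (\ref{eq:condition}).

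The proof then closes using positivity: each summand $|P(\epsilon z)|^2$ is nonnegative and their sum vanishes, so in particular (taking $\epsilon = (1,\dots,1)$) we get $|P(z)|^2 = 0$ for every real $z$. Hence $P$ vanishes identically on $\mathbb{R}^d$, so $P \equiv 0$, which gives $a^\gamma = 0$ for all $|\gamma| = N$.

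The step I expect to be the genuine point of the argument, rather than routine bookkeeping, is recognizing that (\ref{eq:condition}) is not merely one scalar identity per $\gamma$ but encodes the vanishing of an \emph{entire} even polynomial, and that this must be paired with the pointwise nonnegativity of $|P|^2$. Summing the hypotheses naively over $\gamma$ only produces $\sum a^\gamma = 0$ within each parity class $\rho \in \{0,1\}^d$ (those $\gamma$ with $\gamma \equiv \rho$ componentwise mod $2$), which is far too weak to force $P \equiv 0$; it is the sign-averaging identity together with $|P(\epsilon z)|^2 \ge 0$ that upgrades these partial cancellations to the vanishing of $P$ itself.
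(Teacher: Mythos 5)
Your proof is correct, and it takes a genuinely different route from the paper's. The paper argues by a double induction: an outer induction on the dimension $d$ (the base case $d=1$ being immediate, since there the hypothesis literally reads $|a^N|^2=0$), and an inner induction on the value $k$ of the smallest component of $\gamma$, where each step reduces the claim to the $(d-1)$-dimensional statement applied to the slice $b^{\delta'}=a^{(\delta',k)}$ with homogeneity degree $N-k$. Your argument replaces this combinatorial bookkeeping with a single structural observation: the hypothesis, taken over all $\gamma$ with $|\gamma|=N$ simultaneously, says exactly that the componentwise-even part of the nonnegative polynomial $|P(z)|^2$ vanishes, where $P(z)=\sum_{|\alpha|=N}a^\alpha z^\alpha$; sign-averaging extracts that even part, and positivity then forces $P\equiv 0$, hence all coefficients vanish. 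What your approach buys is brevity and an explanation of \emph{why} the lemma is true --- it is a sum-of-squares phenomenon, and the identification of $F_\gamma^d$ with the even monomials of $|P|^2$ makes the hypothesis transparent; it also dispenses with induction entirely, treating all $d$ at once. What the paper's approach buys is that it stays entirely within elementary manipulations of the index sets $F_\gamma^d$, never invoking polynomial function--coefficient correspondence or positivity, which keeps it self-contained at the level of the coefficients themselves. Both proofs correctly exploit the same essential point, which you identify explicitly: the conditions for the various $\gamma$ must be used jointly (as the vanishing of one polynomial) rather than one at a time, since a single relation $\sum_{F_\gamma^d}a^\alpha\overline{a^\beta}=0$ mixes distinct coefficients and by itself forces nothing.
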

\begin{proof}
    We proceed by induction on $d.$ For $d=1$ the lemma is obvious since
condition (\ref{eq:condition}) means $|a^\gamma|^2=0$ for
$\gamma=N.$ Let us now assume that the lemma holds in dimension
$d-1$ ($d\geq 2$) for every $N\geq 1$ and let $\left\{a^\gamma:\
\gamma\in {\mathbb N}_0^d\right\}\subset {\mathbb C}$ be given such
that condition (\ref{eq:condition}) is satisfied. Now we proceed by
induction on $k$ to prove that $a^\gamma = 0$ whenever $|\gamma|=N$
and at least one component of $\gamma$ equals $k.$ Let
us first assume that $k = 0,$ so $\gamma$ has at most $d-1$ non-null
components. Without loss of generality, we can assume
$\gamma_d = 0.$ For every $\delta'\in {\mathbb N}_0^{d-1}$ we define
    $$
    b^{\delta'} = a^{(\delta', 0)}.$$ For any $\delta'\in {\mathbb N}_0^{d-1}$
with $|\delta'| = N$ we put $\delta = (\delta', 0).$ Then
    $$
    \sum_{(\alpha', \beta')\in F_{\delta'}^{d-1}} b^{\alpha'}
\overline{b^{\beta'}} = \sum_{(\alpha, \beta)\in F_{\delta}^d}
a^\alpha \overline{a^\beta} = 0.$$ Our hypothesis on the validity of
the lemma in dimension $d-1$ permits to conclude that $a^\gamma =
0.$ Let us now assume that $a^\gamma = 0$ whenever $|\gamma|=N$ and
at least one component of $\gamma$ is less than $k$
($k\geq 1$) and let us fix $\gamma\in {\mathbb N}_0^d$ such that
$|\gamma|=N$ and at least one component equals $k.$ We
can assume $\gamma_d = k$ and $\gamma_j \geq k$ for every $1\leq
j\leq d-1$ (otherwise $a^\gamma = 0$). This implies $1 \leq k < N.$
For every $\delta'\in {\mathbb N}_0^{d-1}$ we define
    $$
    b^{\delta'} = a^{(\delta', k)}$$ and we put $\delta = (\delta', k).$ Then,
    for every $\delta'\in {\mathbb N}_0^{d-1}$ with $|\delta'| = N-k,$
    $$
    \sum_{(\alpha', \beta')\in F_{\delta'}^{d-1}} b^{\alpha'} \overline{b^{\beta'}} = \sum_{(\alpha, \beta)\in F_{\delta}^d} a^\alpha \overline{a^\beta} = 0.$$
    We observe that $|\delta| = N$ and condition $(\alpha, \beta)\in
F_{\delta}^d$ implies $\alpha_d + \beta_d = 2k,$ hence we can assume
$\alpha_d = \beta_d = k.$ Otherwise some of the coefficients
$\alpha_d, \beta_d$ is less than $k$, from where it follows
$a^\alpha \overline{a^\beta}= 0.$ Our hypothesis on the validity of
the lemma in dimension $d-1$ (applied to $N-k$ instead of $N$)
permits to conclude that $b^{\delta'}=0$ for any $\delta'\in
{\mathbb N}_0^{d-1}$ such that $|\delta'|=N-k.$ In particular,
$a^\gamma = b^{\gamma'} = 0.$ Here $\gamma' = (\gamma_1, \ldots,
\gamma_{d-1}).$  The proof is finished.
\end{proof}

\par\vskip 1cm
{\it Proof of Theorem \ref{th:main} for arbitrary $d$:}

    As in the case $d=1$ we only need to check the statement concerning
$\mu.$ Let us write $S_\mu$ for the support of $\mu,$ which is
contained in $A.$ According to Lemma \ref{lem:sum_deltas} we put
    $$\mu=\sum_{r\in S_\mu}\sum_{|\alpha|\leq N} a_r^\alpha \delta^{(\alpha)}_r,$$ with $a_r^\alpha\in {\mathbb C}.$ Our aim is to show that $a_r^\alpha = 0$ for all $r\in S_\mu$ and $|\alpha|\geq 1.$
    \par\medskip
Recall that $\{\frac{r+s}{2}: r, \, s \in S_\mu \}$ has no
accumulation points. Fix $r_0 \in S_\mu$ and $\gamma\in {\mathbb
N}_0^d$ with $|\gamma|= N$ and choose a smooth function $\phi_1$
supported on a sufficiently  small neighborhood of $r_0$ and
satisfying $\phi_1^{(2\gamma)}(r_0)\neq 0$ while
$\phi_1^{(\alpha)}(r_0)=0$ for each $\alpha\neq 2\gamma.$ Then, for
every   $\phi_2 \in {\mathcal S}({\mathbb R}^d)$ (real valued), we
have
    $$\langle W(\mu), \phi_1 \otimes \phi_2 \rangle =
\phi_1^{(2\gamma)}(r_0)\left(\frac{1}{2}\right)^{2|\gamma|}\sum_{(r,s)
\in D(r_0)}\Big(\sum_{(\alpha,\beta)\in F^d_\gamma}a^\alpha_r
\overline{a^\beta_s}\Big)\cdot
\overline{\widehat{\phi_2}}(r-s),$$ where
$D(r_0):=\{(r, s):\ r,s\in S_\mu,\ r+s=2r_0\}.$ Proceeding as in the
case $d=1$ we obtain
    $$
    \sum_{(\alpha,\beta)\in F^d_\gamma}a^\alpha_{r_0}
\overline{a^\beta_{r_0}} = 0.$$ An application of Lemma
\ref{lem:several} gives $a_{r}^\gamma = 0$ whenever $|\gamma| = N$
and $r\in S_\mu.$ Now a recurrence argument shows that $a_{r}^\gamma
= 0$ for every $r\in S_\mu$ and $\gamma\in {\mathbb N}_0^d$ with
$|\gamma|\geq 1.$ The conclusion follows. $\Box$

\section{The matrix-Wigner transform}\label{sec:matrix}

A natural generalization of the hypothesis of Theorem \ref{th:main}
is the case where different input functions or distributions $\mu$
and $\nu$ are considered for the Wigner transform. This situation is
more involved but still some results can be obtained. Furthermore we
shall consider a generalization of the Wigner transform, called {\it
matrix-Wigner transform}, see \cite{Bayer}, which, using a
composition with linear maps, will yield a unifying framework
connecting our results to those of \cite{olevskii},
\cite{olevskii2}. We need some preliminaries.

We begin by recalling the following notations. For a set $E\in
\mathbb R^{2d}=\mathbb R^d_x\times\mathbb R^d_\omega$ we indicate
the projections on the $x$ and $\omega$-coordinates as:
$$
\begin{array}{c}
\Pi_1(E)=\{x\in\mathbb R^d_x: \exists \, \omega\in\mathbb R^d_\omega
\hbox{\
such\ that \ } (x,\omega)\in E\},\\
\Pi_2(E)=\{\omega\in\mathbb R^d_\omega: \exists \, x\in\mathbb R^d_x
\hbox{\ such\ that \ } (x,\omega)\in E\}.
\end{array}
$$

When it is clear from the context we shall however omit the
subscripts $x$ and $\omega$ in $\mathbb R^d_x$ and $\mathbb
R^d_\omega$. We recall without proof the following well-known
property which will be used later.

\begin{proposition}
Setting ${\mathcal F}_1F(\nu, t)=\int_{{\mathbb R}^d}F(x,t) e^{-2\pi i
\nu x}\ dx$ and ${\mathcal F}_2F(x,\omega)=\int_{{\mathbb R}^d}F(x,t)
e^{-2\pi i \omega t}\ dt$ for $F\in \mathcal S(\mathbb R^{2d})$,
with usual extensions to $\mathcal S'(\mathbb R^{2d})$, the partial
Fourier transforms $\mathcal F_1, \mathcal F_2$ are bicontinuous
isomorphisms from $\mathcal S(\mathbb R^{2d})$ to $\mathcal
S(\mathbb R^{2d})$ and from $\mathcal S'(\mathbb R^{2d})$ to
$\mathcal S'(\mathbb R^{2d})$.
\end{proposition}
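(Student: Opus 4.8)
The plan is to establish the claim first on $\mathcal{S}(\mathbb{R}^{2d})$ and then transfer it to $\mathcal{S}'(\mathbb{R}^{2d})$ by duality. Since the linear change of variables $(x,t)\mapsto (t,x)$ is a topological isomorphism of both $\mathcal{S}$ and $\mathcal{S}'$ that conjugates $\mathcal{F}_1$ into $\mathcal{F}_2$, it suffices to treat $\mathcal{F}_2$; the assertion for $\mathcal{F}_1$ then follows automatically.

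First I would show that $\mathcal{F}_2$ maps $\mathcal{S}(\mathbb{R}^{2d})$ continuously into itself. The key point is that $\mathcal{F}_2$ acts as an ordinary ($d$-dimensional) Fourier transform in the variable $t$ while leaving $x$ untouched, so it intertwines multiplication and differentiation exactly as the full Fourier transform does, but only in the second slot. Concretely, for $F\in \mathcal{S}(\mathbb{R}^{2d})$ one has $\partial_{x}^{\alpha}\mathcal{F}_2 F = \mathcal{F}_2(\partial_{x}^{\alpha}F)$ and $x^{\alpha}\mathcal{F}_2 F = \mathcal{F}_2(x^{\alpha}F)$, the $x$-operations simply passing through the $t$-integral (justified by the rapid decay in $t$), whereas the $\omega$-operations are governed by $\omega^{\beta}\partial_{\omega}^{\gamma}\mathcal{F}_2 F = c_{\beta,\gamma}\,\mathcal{F}_2\big(\partial_{t}^{\beta}(t^{\gamma}F)\big)$ for suitable constants $c_{\beta,\gamma}$. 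Combining these with the elementary bound $\sup_{x,\omega}|\mathcal{F}_2 G(x,\omega)|\le \int_{\mathbb{R}^d}|G(x,t)|\,dt$, every Schwartz seminorm of $\mathcal{F}_2 F$ is dominated by finitely many Schwartz seminorms of $F$, which is exactly the required continuity.

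For inversion, I would observe that $\mathcal{F}_2^{-1}G(x,t)=\int_{\mathbb{R}^d}G(x,\omega)e^{2\pi i \omega t}\,d\omega$ has the same structure as $\mathcal{F}_2$ (only the sign in the kernel changes), so the identical argument shows $\mathcal{F}_2^{-1}$ is continuous from $\mathcal{S}$ to $\mathcal{S}$; and applying the one-variable Fourier inversion theorem in $t$, pointwise in the parameter $x$, gives $\mathcal{F}_2^{-1}\mathcal{F}_2 = \mathcal{F}_2\mathcal{F}_2^{-1}=\mathrm{Id}$. Hence $\mathcal{F}_2$ is a bicontinuous isomorphism of $\mathcal{S}(\mathbb{R}^{2d})$. (Alternatively, one could invoke the nuclearity identification $\mathcal{S}(\mathbb{R}^{2d})\cong \mathcal{S}(\mathbb{R}^d_x)\,\widehat{\otimes}\,\mathcal{S}(\mathbb{R}^d_t)$ and write $\mathcal{F}_2=\mathrm{Id}\otimes\mathcal{F}$, but the elementary seminorm route is self-contained.) The extension to $\mathcal{S}'(\mathbb{R}^{2d})$ is then purely formal: the usual extension is the transpose, defined through the pairing of the paper by $\langle \mathcal{F}_2 u,\phi\rangle = \langle u, {}^{t}\!\mathcal{F}_2\phi\rangle$, where ${}^{t}\!\mathcal{F}_2$ is the adjoint of $\mathcal{F}_2$ with respect to the sesquilinear bracket $\langle\cdot,\cdot\rangle$ and is itself a partial Fourier transform of the same type. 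Since the transpose of a topological isomorphism of $\mathcal{S}$ is automatically a topological isomorphism of $\mathcal{S}'$, the statement on $\mathcal{S}'$ follows at once.

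I do not expect a serious obstacle, as the result is classical; the only genuine bookkeeping is verifying the intertwining identities with the correct constants and checking that the weighted sup-norm seminorms of $\mathcal{F}_2 F$ are controlled. Here the $x$-weight is immediate, since $\mathcal{F}_2$ does not act on $x$, while the $\omega$-weight is where the real work sits and is supplied by the smoothness and decay of $F$ in $t$. A secondary point requiring care is the consistency of the adjoint/pairing convention for the $\mathcal{S}'$ extension, given that the paper uses the conjugate-linear bracket rather than the bilinear one.
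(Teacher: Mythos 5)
Your proof is correct, but note that the paper itself offers no proof to compare against: this proposition is explicitly stated as a well-known fact (``We recall without proof the following well-known property''), so any complete argument here goes beyond what the authors wrote. Your route --- reduce $\mathcal{F}_1$ to $\mathcal{F}_2$ by the coordinate swap, control the Schwartz seminorms of $\mathcal{F}_2F$ via the intertwining identities $x^{\alpha}\partial_x^{\gamma}$ passing through the $t$-integral and $\omega^{\beta}\partial_\omega^{\delta}$ turning into $\partial_t^{\beta}(t^{\delta}\,\cdot)$, invert by the Fourier inversion theorem in the $t$-variable pointwise in $x$, and transfer to $\mathcal{S}'$ by transposition --- is exactly the standard textbook argument, and it is sound. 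One detail worth making explicit, which you flag but do not compute: with the paper's conjugate-linear bracket, the adjoint appearing in the definition $\langle \mathcal{F}_2 u,\phi\rangle=\langle u,{}^{t}\mathcal{F}_2\phi\rangle$ works out to be ${}^{t}\mathcal{F}_2=\mathcal{F}_2^{-1}$ (Parseval in the second slot), so the extension is indeed by a partial Fourier transform of the same type and the consistency with the classical $\mathcal{F}_2$ on $\mathcal{S}$ is automatic.
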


We need now to discuss some properties concerning supports.

\begin{lemma}\label{Proj}
Suppose that $\Psi\in\mathcal S'(\mathbb R^{2d})$. If
$\Pi_1\supp\Psi$ or $\Pi_1\supp\mathcal F_2 \Psi$ are u.d. sets in
$\mathbb R^d$, then $\Pi_1\supp \Psi=\Pi_1\supp\mathcal F_2 \Psi$
(and therefore both are u.d.).
\end{lemma}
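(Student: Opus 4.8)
The plan is to reduce everything to a single localization principle: for every open set $U\subseteq\mathbb R^d_x$ one has $\supp\Psi\cap(U\times\mathbb R^d)=\emptyset$ if and only if $\supp\mathcal F_2\Psi\cap(U\times\mathbb R^d)=\emptyset$. This expresses the fact that $\mathcal F_2$ acts only on the second group of variables and therefore cannot alter the way the support is distributed over the first group. Granting this principle, the equality of closures is almost formal: a point $x_0$ belongs to the union of all open $U\subseteq\mathbb R^d_x$ with $U\times\mathbb R^d$ disjoint from $\supp\Psi$ precisely when $x_0\notin\overline{\Pi_1\supp\Psi}$, and likewise for $\mathcal F_2\Psi$; since the principle makes these two unions coincide, one obtains $\overline{\Pi_1\supp\Psi}=\overline{\Pi_1\supp\mathcal F_2\Psi}$ with no discreteness assumption whatsoever.

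First I would prove the localization principle. Assume $\Psi$ vanishes on $U\times\mathbb R^d_t$ and fix $\varphi\in\mathcal D(U\times\mathbb R^d_\omega)$; writing $\langle\mathcal F_2\Psi,\varphi\rangle=\langle\Psi,\Phi\rangle$ with $\Phi$ the transpose partial Fourier transform of $\varphi$ in the $\omega$-variable, the function $\Phi$ is Schwartz and, crucially, keeps the same projection onto $\mathbb R^d_x$, so $\supp\Phi\subseteq K\times\mathbb R^d_t$ for some compact $K\subseteq U$. The only obstacle is that $\Phi$ is not compactly supported in $t$, so it cannot be paired directly against the hypothesis $\Psi|_{U\times\mathbb R^d}=0$. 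I would remove this by truncation: with $\chi_R(t)=\chi(t/R)$ a standard cutoff, each $\chi_R\Phi$ lies in $\mathcal D(U\times\mathbb R^d_t)$, whence $\langle\Psi,\chi_R\Phi\rangle=0$, and since $\chi_R\Phi\to\Phi$ in $\mathcal S(\mathbb R^{2d})$ the continuity of $\Psi$ gives $\langle\Psi,\Phi\rangle=0$. This yields one implication; the reverse is identical after replacing $\mathcal F_2$ by $\mathcal F_2^{-1}$, which has the same structure. I expect this truncation-and-limit step, together with the verification that $\chi_R\Phi\to\Phi$ in every Schwartz seminorm, to be the technical heart of the argument.

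It remains to upgrade the equality of closures to an equality of the sets themselves, and this is exactly where uniform discreteness enters. Suppose $A:=\Pi_1\supp\Psi$ is u.d.; then $A$ is closed and discrete, so $\overline A=A$ and, by the equality of closures established above, $\Pi_1\supp\mathcal F_2\Psi\subseteq\overline{\Pi_1\supp\mathcal F_2\Psi}=\overline{\Pi_1\supp\Psi}=A$. Now any subset of a u.d. set has no accumulation points and is therefore closed, so $\Pi_1\supp\mathcal F_2\Psi$ equals its own closure, which is $A$; this forces $\Pi_1\supp\Psi=\Pi_1\supp\mathcal F_2\Psi$ and shows both sets are u.d. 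The symmetric hypothesis, that $\Pi_1\supp\mathcal F_2\Psi$ be u.d., is handled identically by applying the argument to $\mathcal F_2\Psi$ in place of $\Psi$. I would stress in the write-up that discreteness cannot be dropped at this final stage: the projection of a closed set need not be closed, so without it one only recovers the equality of the closures.
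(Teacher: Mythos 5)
Your proof is correct and follows essentially the same route as the paper: a strip-localization principle (vanishing of $\Psi$ on $U\times\mathbb R^d$ is equivalent to vanishing of $\mathcal F_2\Psi$ there, obtained by moving the partial Fourier transform onto test functions supported in the strip), followed by the use of uniform discreteness to turn membership in the complement of the projection into vanishing on an open strip. Your explicit truncation argument $\chi_R\Phi\to\Phi$ in $\mathcal S$ and the intermediate statement that the closures of the two projections always coincide are just more careful renderings of steps the paper performs implicitly by pairing directly with Schwartz functions supported in the strip.
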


\begin{proof}
Let $I$ be an interval in $\mathbb R^d$ (i.e. the cartesian product
of $d$ open intervals of $\mathbb R$). Preliminarly we observe that
a distribution $\Psi\in\mathcal S'(\mathbb R^{2d})$ vanishes on the
strip $I\times \mathbb R^d$ if and only if the restriction of
$\mathcal F_2\Psi$ to the same strip also vanishes. Indeed if the
distribution $\Psi$ vanishes on $I\times \mathbb R^d$, then for
every $\phi\in\mathcal S(\mathbb R^{2d})$ with $\supp\phi\subset
I\times\mathbb R^d$, we have
$$
0=\langle \Psi, \phi \rangle = \langle \mathcal F_2\Psi, \mathcal
F_2\phi\rangle,
$$
which means that also $\mathcal F_2 \Psi$ vanishes on $I\times
\mathbb R^d$ because $\mathcal F_2:\mathcal S(\mathbb
R^{2d})\longrightarrow\mathcal S(\mathbb R^{2d})$ is a bijection
which preserves the inclusion of the supports in $I\times\mathbb
R^d$. The converse is analogous.

Let us suppose now that $\Pi_1\supp \Psi$ is a u.d. set and $x_0
\notin \Pi_1\supp \Psi$, then there exists an open interval
$I\subset \mathbb R^d$ containing $x_0$ such that $\Psi$ vanishes on
$I\times \mathbb R^d$. From the first part of this proof we know
that also $\mathcal F_2\Psi$ vanishes on $I\times \mathbb R^d$ and
therefore $x_0\notin\Pi_1\supp\mathcal F_2\Psi$. This proves the
inclusion
$$
\Pi_1\supp\mathcal F_2 \Psi \subseteq \Pi_1\supp\Psi,
$$
which in particular shows that also $\Pi_1\supp\mathcal F_2 \Psi$ is
u.d.

The opposite inclusion $\Pi_1\supp\Psi \subseteq \Pi_1\supp\mathcal
F_2 \Psi$, is proved by the same argument and we have therefore
$$
\Pi_1\supp\Psi = \Pi_1\supp\mathcal F_2 \Psi.
$$

Finally the case where $\Pi_1\supp \mathcal F_2 \Psi$ is a u.d. set
can be proved in analogous way.
\end{proof}

\begin{remark}{\rm
We observe that the hypothesis of u.d.ness of either
$\Pi_1\supp\Psi$ or $\Pi_1\supp\mathcal F_2 \Psi$ in the previous
proposition is essential. Consider for example
$$
\Psi=\sum_{n\in\mathbb N} \delta_{1/n}(x)\otimes\delta_{n}(\omega).
$$
Then $0$ belongs to $\Pi_1\supp \mathcal F_2\Psi$ but not to
$\Pi_1\supp \Psi$.}
\end{remark}

In order to treat the matrix-Wigner transform, to be defined later,
we need to consider some properties of linear maps in connection
with supports.

\begin{proposition}\label{T}
Let $T:\mathbb R^n \longrightarrow \mathbb R^n$ be a linear
bijective map. Then for every $\Psi\in \mathcal S'(\mathbb R^n)$ we
have $\supp(\Psi\circ T)=T^{-1}(\supp \Psi)$.
\end{proposition}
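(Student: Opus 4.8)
The plan is to reduce everything to the defining duality relation for the pullback of a distribution by a linear isomorphism and then to characterise the support through vanishing on open sets. Recall that for $T$ linear and bijective the composition $\Psi\circ T\in\mathcal S'(\mathbb R^n)$ is defined by
$$
\langle \Psi\circ T, \phi\rangle = |\det T|^{-1}\,\langle \Psi, \phi\circ T^{-1}\rangle,\qquad \phi\in\mathcal S(\mathbb R^n),
$$
which is the distributional form of the change of variables $y=Tx$. The first observation I would record is that, since $T$ is a linear homeomorphism, the assignment $\phi\mapsto \phi\circ T^{-1}$ is a bijection of $\mathcal S(\mathbb R^n)$ onto itself that carries the test functions supported in an open set $U$ exactly onto the test functions supported in $T(U)$, because $\supp(\phi\circ T^{-1})=T(\supp\phi)$.

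Next I would translate the statement ``$\Psi\circ T$ vanishes on the open set $U$'' into a condition on $\Psi$. By definition this means $\langle \Psi\circ T,\phi\rangle=0$ for every $\phi$ with $\supp\phi\subset U$; in view of the displayed duality (and the irrelevance of the nonzero factor $|\det T|^{-1}$) this is equivalent to $\langle\Psi, \psi\rangle=0$ for every $\psi=\phi\circ T^{-1}$, that is, for every test function $\psi$ supported in $T(U)$. Hence $\Psi\circ T$ vanishes on $U$ if and only if $\Psi$ vanishes on $T(U)$.

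With this equivalence in hand the conclusion is immediate. A point $x_0$ lies outside $\supp(\Psi\circ T)$ precisely when there is an open neighbourhood $U$ of $x_0$ on which $\Psi\circ T$ vanishes, equivalently on which $\Psi$ vanishes on $T(U)$. Since $T$ is a homeomorphism, $\{T(U):U\ni x_0\text{ open}\}$ is exactly the family of open neighbourhoods of $T(x_0)$, so this happens if and only if $T(x_0)\notin\supp\Psi$, i.e. $x_0\notin T^{-1}(\supp\Psi)$. Taking complements yields $\supp(\Psi\circ T)=T^{-1}(\supp\Psi)$. The only point requiring a little care---the main, if modest, obstacle---is the bookkeeping that the pushforward of the test functions supported in $U$ fills out all test functions supported in $T(U)$ and nothing more; everything else is a direct unwinding of the definitions, and the argument is insensitive to the choice of sesquilinear versus bilinear pairing since only the vanishing of the bracket is used.
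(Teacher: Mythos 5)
Your proof is correct and follows essentially the same route as the paper's: both unwind the duality definition of the pullback $\langle \Psi\circ T,\phi\rangle=|\det T|^{-1}\langle\Psi,\phi\circ T^{-1}\rangle$ together with the fact that composition with $T$ carries test functions supported in $U$ onto those supported in $T(U)$. The only (cosmetic) difference is organizational: you establish the equivalence ``$\Psi\circ T$ vanishes on $U$ iff $\Psi$ vanishes on $T(U)$'' and obtain both inclusions at once by complementation, whereas the paper proves the inclusion $T^{-1}(\supp\Psi)\subseteq\supp(\Psi\circ T)$ pointwise and then gets the reverse by applying the same argument to $\Psi\circ T$ and $T^{-1}$.
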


\begin{proof} As $T$ is a diffeomorphism, by definition of the composition with a distribution,
we have for $\phi\in \mathcal S(\mathbb R^n)$:
$$
\langle \Psi\circ T,\phi \rangle=|{\rm det} T^{-1}|\langle \Psi,
\phi\circ T^{-1} \rangle
$$

As $\Psi\in \mathcal S'(\mathbb R^n)\subset \mathcal D'(\mathbb
R^n)$ we have that $x\in\supp \Psi$ implies that for every
neighborhood $U$ of the origin there exists a function $\phi\in
C_c^\infty(x+U)$ such that $\langle \Psi,\phi \rangle\ne 0$.

We have then
$$
0\ne\langle \Psi,\phi \rangle = |{\rm det}T| \langle \Psi\circ T,
\phi\circ T \rangle
$$
where $\phi\circ T\in C_c^\infty(T^{-1}x+T^{-1}U)$, which implies
$T^{-1}x\in \supp(\Psi\circ T)$. This proves the inclusion
$T^{-1}(\supp \Psi)\subseteq \supp \Psi\circ T$. Clearly
$\Psi=(\Psi\circ T)\circ T^{-1}$, so the same argument with $T^{-1}$
instead of $T$ proves the opposite inclusion.
\end{proof}

\begin{remark}{\rm
$\Psi\circ T$ is the {\it pull-back} $T^*\Psi$ of $\Psi$, here for
analogy with the case where $\Psi$ is a function we shall write for
short $T(\Psi)$.
\par\medskip\noindent
In particular for $n=2d$ and $\mu,\nu\in \mathcal S'(\mathbb R^d)$,
we have that $x_1\in\supp \mu$, $x_2\in\supp \nu$ implies
$(\frac{x_1+x_2}{2},x_1-x_2)\in\supp(\mathcal
T_s(\mu\otimes\overline\nu))$, where $ \mathcal T_s$ is the
symmetric coordinate change operator defined in \eqref{Ts}.}
\end{remark}

More generally let $T:(x,y)\in\mathbb R^{2d}\longrightarrow
(u,v)=T(x,y)\in\mathbb R^{2d}$ be an invertible linear
transformation, with abuse of notation we shall still indicate with
$T$ the matrix associated with the transformation and the {\sl
change of coordinate operator} given by
$$
T: F(x,y)\longrightarrow (TF)(x,y)=F\circ T(x,y).
$$
with natural extension to distributions. We have then the following:

\begin{proposition}\label{4cases}
Suppose that $\mu, \nu \in \mathcal S'(\mathbb R^{2d})$, and
$T:\mathbb R^{2d}\longrightarrow\mathbb R^{2d}$ is a linear
invertible transformation. Let us write the inverse matrix of $T$
as:
$$
T^{-1}=\left(
         \begin{array}{cc}
           A & B \\
           C & D \\
         \end{array}
       \right)
$$
with $A, B, C, D$ submatrices of dimension $d\times d$. Then the
following hold:
$$
\begin{array}{cc}
(i)&   \Pi_1\supp T(\mu\otimes\nu) \hbox{\rm \ is an u.d. set}, {\rm
det} A\ne 0 \Longrightarrow \supp \mu \hbox{\rm \ is a u.d. set\ }\\
(ii)&  \Pi_1\supp T(\mu\otimes\nu) \hbox{\rm \ is an u.d. set}, {\rm
det} B\ne 0 \Longrightarrow \supp \nu \hbox{\rm \ is a u.d. set\ }\\
(iii)& \Pi_2\supp T(\mu\otimes\nu) \hbox{\rm \ is an u.d. set}, {\rm
det} C\ne 0 \Longrightarrow \supp \mu \hbox{\rm \ is a u.d. set\ }\\
(iv)&  \Pi_2\supp T(\mu\otimes\nu) \hbox{\rm \ is an u.d. set}, {\rm
det} D\ne 0 \Longrightarrow \supp \nu \hbox{\rm \ is a u.d. set\ }\\
\end{array}
$$
\end{proposition}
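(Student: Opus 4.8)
The plan is to reduce all four implications to a single mechanism: pull the uniform discreteness hypothesis on a projection of $\supp T(\mu\otimes\nu)$ back to one of the factor supports through an invertible $d\times d$ block of $T^{-1}$. Throughout I read the statement in its intended setting $\mu,\nu\in\mathcal S'(\mathbb R^d)$, so that $\mu\otimes\nu\in\mathcal S'(\mathbb R^{2d})$ and $T$ acts on $\mathbb R^{2d}$, and I assume both distributions are nonzero; if either vanishes then $\mu\otimes\nu=0$ and there is nothing to prove.

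First I would make the supports explicit. Since $\supp(\mu\otimes\nu)=\supp\mu\times\supp\nu$, and Proposition \ref{T} gives $\supp T(\mu\otimes\nu)=T^{-1}\big(\supp(\mu\otimes\nu)\big)$, writing a point of $\supp\mu\times\supp\nu$ as $(x,y)$ with $x\in\supp\mu$, $y\in\supp\nu$ and applying $T^{-1}=\left(\begin{smallmatrix} A & B \\ C & D\end{smallmatrix}\right)$ yields
\begin{equation*}
\supp T(\mu\otimes\nu)=\big\{(Ax+By,\ Cx+Dy):\ x\in\supp\mu,\ y\in\supp\nu\big\}.
\end{equation*}
Projecting onto the two factors of $\mathbb R^{2d}=\mathbb R^d_u\times\mathbb R^d_v$ then gives
\begin{align*}
\Pi_1\supp T(\mu\otimes\nu) &=\{Ax+By:\ x\in\supp\mu,\ y\in\supp\nu\},\\
\Pi_2\supp T(\mu\otimes\nu) &=\{Cx+Dy:\ x\in\supp\mu,\ y\in\supp\nu\}.
\end{align*}

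Next I would prove case $(i)$ in full and let the others follow verbatim. Fix any $y_0\in\supp\nu$ and consider the slice $S:=\{Ax+By_0:\ x\in\supp\mu\}$, which is contained in $\Pi_1\supp T(\mu\otimes\nu)$ and is therefore u.d., say with separation constant $\delta>0$. Because ${\rm det}\,A\neq 0$, the affine map $x\mapsto Ax+By_0$ is injective, so distinct points of $\supp\mu$ produce distinct points of $S$; moreover $|A(x-x')|\leq\|A\|\,|x-x'|$. Hence for $x\neq x'$ in $\supp\mu$ we obtain $\delta\leq |(Ax+By_0)-(Ax'+By_0)|=|A(x-x')|\leq\|A\|\,|x-x'|$, so $|x-x'|\geq\delta/\|A\|>0$ and $\supp\mu$ is u.d. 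The remaining cases are identical after the obvious substitutions: in $(ii)$ fix $x_0\in\supp\mu$ and use the invertibility of $B$ on the slice $\{Ax_0+By:\ y\in\supp\nu\}\subseteq\Pi_1$; in $(iii)$ fix $y_0\in\supp\nu$ and use $C$ on $\{Cx+Dy_0\}\subseteq\Pi_2$; in $(iv)$ fix $x_0\in\supp\mu$ and use $D$ on $\{Cx_0+Dy\}\subseteq\Pi_2$.

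The step I would flag as the crux is this final passage, from \emph{the image slice is u.d.} back to \emph{the factor support is u.d.}: it rests precisely on the invertibility of the relevant block, which simultaneously guarantees injectivity of the slice map (so no two points of the factor collapse) and the bi-Lipschitz lower bound $|x-x'|\geq\delta/\|A\|$. Everything else is bookkeeping with the block form of $T^{-1}$. The only subtlety worth isolating is that the fixed factor must be nonempty — $\supp\nu$ in $(i)$, $(iii)$ and $\supp\mu$ in $(ii)$, $(iv)$ — which is exactly where the nonvanishing of $\mu$ and $\nu$ enters.
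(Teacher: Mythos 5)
Your proof is correct and is essentially the paper's argument in contrapositive form: the paper fixes $z\in\supp\nu$, sets $P=T^{-1}(x,z)$, $Q=T^{-1}(y,z)$ for two arbitrarily close points $x\neq y$ of $\supp\mu$, and uses $0<\|A(x-y)\|=\|\Pi_1P-\Pi_1Q\|\le\|T^{-1}\|\,\|x-y\|$ to contradict uniform discreteness of $\Pi_1\supp T(\mu\otimes\nu)$, which is exactly the injectivity-plus-Lipschitz mechanism you run directly on the slice $\{Ax+By_0:\ x\in\supp\mu\}$. One shared caveat: both proofs need the fixed factor support to be nonempty, and strictly speaking when $\nu=0$ the hypothesis of (i) holds vacuously while the conclusion can fail (take $\mu$ with non-u.d.\ support), so the right reading is that the proposition presupposes nonzero distributions rather than that the degenerate case is ``nothing to prove.''
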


\begin{proof}
We prove (i), the others are analogous. By contradiction suppose
that $\supp\mu$ is not u.d.. Then for every $\epsilon>0$ there exist
$x,y\in \supp \mu$ such that $0<\|x-y\|<\epsilon$.

\noindent Let $z\in\supp \nu$, then $(x,z)$ and $(y,z)$ belong to
$\supp(\mu\otimes \nu)$.

\noindent Let $P=T^{-1}(x,z)\in \mathbb R^{2d}$ and
$Q=T^{-1}(y,z)\in \mathbb R^{2d}$ i.e.
$$
P=\left(
    \begin{array}{c}
      Ax+Bz \\
      Cx+Dz \\
    \end{array}
  \right); \hskip1cm
Q=\left(
  \begin{array}{c}
    Ay+Bz \\
    Cy+Dz \\
  \end{array}
\right),
$$
then, by Proposition \ref{T}, $P$ and $Q$ belong to $\supp
T(\mu\otimes\nu)$.

As ${\rm det}A\ne 0$ and $x-y\ne 0$, we have
$$
\begin{array}{rl}
0<\|A(x-y)\|    &     = \| \Pi_1 P - \Pi_1 Q\| \le \|P-Q\|   \\
                &     \le \|T^{-1}\| \|(x,z)-(y,z)\|=\|T^{-1}\| \|x-y\| < \|T^{-1}\| \ \epsilon.  \\
\end{array}
$$
Then $\Pi_1P$ and $\Pi_1Q$ are distinct points of $\Pi_1\supp
T(\mu\otimes\nu)$ with arbitrary small distance, i.e. $\Pi_1\supp
T(\mu\otimes\nu)$ is not a u.d. set.
\end{proof}


As mentioned above, following \cite{Bayer}, we introduce next a {\it
Matrix-Wigner transform} which is a natural generalization of the
Wigner transform $W(\mu,\nu)=\mathcal F_2(\mathcal T_s(\mu\otimes
\nu))$ where the change of coordinates $\mathcal T_s$ have been
replaced by a general  bijective linear map $T$. This sesquilinear transform
turns out to be a quite comprehensive tool including most of the
basic time-frequency representations, we refer to \cite{Bayer} for
details and properties.

\begin{definition}
Let $T$ and $A, B, C, D$ be as before, then the {\sl Matrix-Wigner
transform} of $\mu, \nu \in \mathcal S'(\mathbb R^d)$ is defined as:
$$
W_T(\mu,\nu)=\mathcal F_2(T(\mu\otimes\overline\nu)).
$$
 As usual we shall write $W_T(\mu)$ for $W_T(\mu,\mu)$.
\end{definition}

In connection with our previous discussion we have the following
property.
\begin{proposition}\label{prop11}
If $\Pi_1\supp W_T(\mu,\nu)$ is a u.d. set, then
$$
\begin{array}{c}
{\rm det}A\ne0 \Longrightarrow \supp\mu \hbox{\rm \ is u.d},\\
{\rm det}B\ne0 \Longrightarrow \supp\nu \hbox{\rm \ is u.d}.\\
\end{array}
$$
In particular for the classical Wigner transform we have
$$
T^{-1}=\left(
         \begin{array}{cc}
           \frac{1}{2} {\rm Id} & \frac{1}{2} {\rm Id} \\
           {\rm Id} & {\rm -Id} \\
         \end{array}
       \right)
$$
where $\rm Id$ is the identity, therefore, as all subdeterminants
are non zero, we have that $\Pi_1\supp W(\mu,\nu)$ u.d. implies that
both $\supp\mu$ and $\supp\nu$ are u.d.
\end{proposition}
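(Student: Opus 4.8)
The plan is to reduce the statement to the two tools already established: Lemma \ref{Proj}, which exchanges the first-variable projection of the support of a distribution with that of its partial Fourier transform $\mathcal F_2$, and Proposition \ref{4cases}, which reads off uniform discreteness of $\supp\mu$ or $\supp\nu$ from that of the projected support of $T(\mu\otimes\overline\nu)$ as soon as the relevant subdeterminant of $T^{-1}$ is nonzero. Writing $\Psi := T(\mu\otimes\overline\nu)$, the definition of the matrix-Wigner transform gives $W_T(\mu,\nu)=\mathcal F_2\Psi$, so the hypothesis is precisely that $\Pi_1\supp\mathcal F_2\Psi$ is a u.d. set.

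First I would invoke Lemma \ref{Proj}. Since $\Pi_1\supp\mathcal F_2\Psi$ is u.d., the lemma yields $\Pi_1\supp\Psi=\Pi_1\supp\mathcal F_2\Psi$, and in particular $\Pi_1\supp T(\mu\otimes\overline\nu)$ is itself u.d. This is the one substantive step: it transfers the discreteness hypothesis from the frequency side, where it is assumed, back to the coordinate side, where Proposition \ref{4cases} can be applied.

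Next I would feed this into Proposition \ref{4cases}. A small compatibility remark is needed, since that proposition is phrased for $T(\mu\otimes\nu)$ whereas $W_T$ uses $\mu\otimes\overline\nu$; but $\supp\overline\nu=\supp\nu$, so $\supp(\mu\otimes\overline\nu)=\supp(\mu\otimes\nu)$ and the conclusions are unaffected. With $\Pi_1\supp T(\mu\otimes\overline\nu)$ u.d. in hand, case (i) of Proposition \ref{4cases} gives $\supp\mu$ u.d. whenever $\det A\ne0$, and case (ii) gives $\supp\nu$ u.d. whenever $\det B\ne0$, which are exactly the two implications claimed.

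Finally, for the classical Wigner transform I would specialize $T$ to the symmetric coordinate change $\mathcal T_s$ of \eqref{Ts}, whose associated map sends $(x,t)$ to $(x+\tfrac t2,\,x-\tfrac t2)$; inverting gives $T^{-1}=\left(\begin{smallmatrix}\frac12\mathrm{Id} & \frac12\mathrm{Id}\\ \mathrm{Id} & -\mathrm{Id}\end{smallmatrix}\right)$, so that $A=\tfrac12\mathrm{Id}$ and $B=\tfrac12\mathrm{Id}$ are both invertible and both $\supp\mu$ and $\supp\nu$ are forced to be u.d. I do not expect a genuine obstacle here: the entire content sits in Lemma \ref{Proj} and Proposition \ref{4cases}, and the proof is only their assembly, together with the trivial invariance of supports under complex conjugation; the only point requiring a moment's care is the passage from $\overline\nu$ to $\nu$ when quoting Proposition \ref{4cases}.
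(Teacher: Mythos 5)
Your proof is correct and follows essentially the same route as the paper's: apply Lemma \ref{Proj} to transfer the u.d.\ hypothesis from $\Pi_1\supp\mathcal F_2\bigl(T(\mu\otimes\overline\nu)\bigr)$ to $\Pi_1\supp T(\mu\otimes\overline\nu)$, then invoke cases (i) and (ii) of Proposition \ref{4cases}. Your explicit remark that $\supp\overline\nu=\supp\nu$ (so the conjugation in $W_T$ is harmless when quoting Proposition \ref{4cases}) and your verification of the matrix $T^{-1}$ for the classical Wigner case are small details the paper leaves implicit, but the argument is the same.
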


\begin{proof}
From Lemma \ref{Proj} we have $\Pi_1\supp
W_T(\mu,\nu)=\Pi_1\supp(T(\mu\otimes\overline\nu))$. Then
$\Pi_1\supp(T(\mu\otimes\nu))$ is u.d. and an application of
Proposition \ref{4cases} in the cases (i) or (ii) yields the thesis.
\end{proof}

Our next aim is to obtain an analogous of Proposition \ref{prop11}
for the case of the Fourier transforms of $\mu$ and $\nu$. Before
giving the statement we need the following remark on block matrices.

\begin{remark}\label{RemBlockMat}{\rm
Let $Z$ be an invertible $2d\times 2d$ matrix, and write
$$
Z=\left(
\begin{array}{cc}
Y & U \\
V &W
\end{array}
\right), \qquad Z^{-1}=\left(
\begin{array}{cc}
E & F \\
G &H
\end{array}
\right),
$$
where $Y,U,V,W,E,F,G,H$ are $d\times d$ matrices. From \cite[Theorem
2.1]{Lu_Shiou} we have that
\begin{equation}\label{BlockMat1}
\begin{split}
{\rm det}Y\ne0 &\Longrightarrow {\rm det}H\ne0,\\
{\rm det}W\ne0 &\Longrightarrow {\rm det}E\ne0.
\end{split}
\end{equation}
Since
$$
Z_1=\left(
\begin{array}{cc}
U & Y \\
W &V
\end{array}
\right) \quad\Longrightarrow\quad Z_1^{-1}=\left(
\begin{array}{cc}
G &H \\
E &F
\end{array}
\right),
$$
applying \eqref{BlockMat1} to $Z_1$ we also have
\begin{equation}\label{BlockMat2}
\begin{split}
{\rm det}U\ne0 &\Longrightarrow {\rm det}F\ne0,\\
{\rm det}V\ne0 &\Longrightarrow {\rm det}G\ne0.
\end{split}
\end{equation}
}
\end{remark}

\begin{proposition}\label{prop11FT}
If $\Pi_2\supp W_T(\mu,\nu)$ is a u.d. set and $T$ is an invertible
matrix satisfying
\begin{equation}\label{T-1}
T^{-1}=\left(
\begin{array}{cc}
A &B \\
C &D
\end{array}
\right)
\end{equation}
as in Proposition \ref{prop11}, then
$$
\begin{array}{c}
{\rm det}A\ne0 \Longrightarrow \supp\widehat{\nu} \hbox{\rm \ is u.d},\\
{\rm det}B\ne0 \Longrightarrow \supp\widehat{\mu} \hbox{\rm \ is u.d}.\\
\end{array}
$$
\end{proposition}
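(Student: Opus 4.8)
The plan is to mirror the proof of Proposition \ref{prop11}, but to transport everything to the frequency side by means of the full Fourier transform $\mathcal F = \mathcal F_1\mathcal F_2$ on $\mathbb R^{2d}$. First I would apply the partial Fourier transform $\mathcal F_1$ to $W_T(\mu,\nu)$. Since $\mathcal F_1$ acts only on the first variable, the analogue of Lemma \ref{Proj} obtained by exchanging the roles of the two coordinates (proved verbatim as the stated one) gives $\Pi_2\supp\Psi = \Pi_2\supp\mathcal F_1\Psi$ whenever one of these projections is u.d.; taking $\Psi = W_T(\mu,\nu)$, the hypothesis makes the left-hand side u.d., so the equality holds and $\Pi_2\supp\mathcal F_1 W_T(\mu,\nu)$ is u.d.

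Next I would rewrite $\mathcal F_1 W_T(\mu,\nu)$. By definition $W_T(\mu,\nu) = \mathcal F_2\big(T(\mu\otimes\overline\nu)\big)$, so $\mathcal F_1 W_T(\mu,\nu) = \mathcal F\big(T(\mu\otimes\overline\nu)\big)$. Using the standard intertwining of the Fourier transform with a linear change of variables, $\mathcal F(\Phi\circ T) = |\det T|^{-1}(\mathcal F\Phi)\circ (T^{-1})^{t}$, together with $\mathcal F(\mu\otimes\overline\nu) = \widehat\mu\otimes\mathcal F(\overline\nu)$, I obtain, up to the nonzero constant $|\det T|^{-1}$,
$$\mathcal F_1 W_T(\mu,\nu) = |\det T|^{-1}\, S\big(\widehat\mu\otimes\widetilde\nu\big), \qquad S := (T^{-1})^{t},\quad \widetilde\nu := \mathcal F(\overline\nu),$$
where $S(\cdot) = (\cdot)\circ S$ is the change-of-coordinate operator. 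Since $\supp\widetilde\nu = -\supp\widehat\nu$, the set $\supp\widetilde\nu$ is u.d. exactly when $\supp\widehat\nu$ is, so it suffices to settle u.d.-ness of $\supp\widehat\mu$ and $\supp\widetilde\nu$.

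I would then apply Proposition \ref{4cases} to the distributions $\widehat\mu,\widetilde\nu\in\mathcal S'(\mathbb R^d)$ and to the linear map $S$, using that $\Pi_2\supp S(\widehat\mu\otimes\widetilde\nu) = \Pi_2\supp\mathcal F_1 W_T(\mu,\nu)$ is u.d. by the first paragraph. Writing the inverse of the change-of-coordinate matrix in block form, $S^{-1} = T^{t} = \begin{pmatrix}\mathcal A & \mathcal B\\ \mathcal C & \mathcal D\end{pmatrix}$, cases (iii) and (iv) of Proposition \ref{4cases} give $\det\mathcal C\neq 0\Rightarrow\supp\widehat\mu$ u.d. and $\det\mathcal D\neq 0\Rightarrow\supp\widetilde\nu$ u.d. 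To translate these back to the blocks of $T^{-1}$, note that $S = (T^{-1})^{t} = \begin{pmatrix} A^t & C^t\\ B^t & D^t\end{pmatrix}$, so Remark \ref{RemBlockMat} applied to $Z = S$ yields $\det A\neq 0\Rightarrow\det\mathcal D\neq 0$ and $\det B\neq 0\Rightarrow\det\mathcal C\neq 0$. Combining, $\det A\neq 0$ forces $\supp\widetilde\nu$, hence $\supp\widehat\nu$, to be u.d., while $\det B\neq 0$ forces $\supp\widehat\mu$ to be u.d., which is exactly the assertion.

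The only analytic ingredient needing a (one-line) justification is the coordinate-swapped form of Lemma \ref{Proj} for the pair $(\Pi_2,\mathcal F_1)$. The genuine obstacle is purely linear-algebraic bookkeeping: keeping track of the transpose and the inverse in passing from $T$ to $S = (T^{-1})^{t}$, and then correctly identifying the blocks so that Remark \ref{RemBlockMat} delivers precisely the implications $\det A\neq 0\Rightarrow\det\mathcal D\neq 0$ and $\det B\neq 0\Rightarrow\det\mathcal C\neq 0$ that feed cases (iii)--(iv) of Proposition \ref{4cases}. A sign error or a swapped transpose here would produce the wrong pairing between $\{A,B\}$ and $\{\widehat\mu,\widehat\nu\}$, so this matching is where I would proceed most carefully.
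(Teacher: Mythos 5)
Your proof is correct, and it takes a genuinely different route from the paper's. The paper proves this proposition by citing the covariance formula of \cite[Proposition 4]{Bayer}, namely $W_T(\mu,\nu)(x,\omega)=|{\rm det}\,T|^{-1}W_R(\widehat{\mu},\widehat{\nu})(\omega,-x)$ with $R=\left(\begin{smallmatrix}{\rm Id} & 0\\ 0 & -{\rm Id}\end{smallmatrix}\right)(T^{-1})^{\rm t}\left(\begin{smallmatrix}0 & {\rm Id}\\ {\rm Id} & 0\end{smallmatrix}\right)$; it computes $R=\left(\begin{smallmatrix}C & A\\ -D & -B\end{smallmatrix}\right)$, observes $\Pi_2\supp W_T(\mu,\nu)=\Pi_1\supp W_R(\widehat{\mu},\widehat{\nu})$, and concludes by Remark \ref{RemBlockMat} applied to $R$ together with Proposition \ref{prop11} (hence cases (i)--(ii) of Proposition \ref{4cases}). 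You bypass the matrix-Wigner covariance formula entirely: applying $\mathcal F_1$ turns $W_T(\mu,\nu)=\mathcal F_2(T(\mu\otimes\overline{\nu}))$ into the full Fourier transform $\mathcal F(T(\mu\otimes\overline{\nu}))$, which the elementary pullback identity $\mathcal F(\Phi\circ T)=|{\rm det}\,T|^{-1}(\mathcal F\Phi)\circ(T^{-1})^{\rm t}$ rewrites as $|{\rm det}\,T|^{-1}S(\widehat{\mu}\otimes\widetilde{\nu})$ with $S=(T^{-1})^{\rm t}$; this costs you the coordinate-swapped variant of Lemma \ref{Proj} (for the pair $(\Pi_2,\mathcal F_1)$, which indeed goes through verbatim) and uses cases (iii)--(iv) of Proposition \ref{4cases} instead of (i)--(ii). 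I checked the linear-algebra bookkeeping, which you rightly flagged as the danger point, and it is right: with $Z=S=\left(\begin{smallmatrix}A^{\rm t} & C^{\rm t}\\ B^{\rm t} & D^{\rm t}\end{smallmatrix}\right)$ and $Z^{-1}=T^{\rm t}=\left(\begin{smallmatrix}\mathcal A & \mathcal B\\ \mathcal C & \mathcal D\end{smallmatrix}\right)$, \eqref{BlockMat1} gives ${\rm det}\,A\neq0\Rightarrow{\rm det}\,\mathcal D\neq0$ and \eqref{BlockMat2} gives ${\rm det}\,B\neq0\Rightarrow{\rm det}\,\mathcal C\neq0$, which via (iv) and (iii) produce exactly the stated pairing ($A$ with $\widehat{\nu}$, $B$ with $\widehat{\mu}$), the reflection $\supp\widetilde{\nu}=-\supp\widehat{\nu}$ being harmless for u.d.-ness. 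As for what each approach buys: the paper's argument is shorter and keeps explicit the identity relating $W_T(\mu,\nu)$ to a matrix-Wigner transform of $(\widehat{\mu},\widehat{\nu})$, which it reuses later (e.g.\ in Corollary \ref{cor:Theorem_Fourier_side}); yours is self-contained within the paper's own toolkit (Lemma \ref{Proj}, Propositions \ref{T} and \ref{4cases}, Remark \ref{RemBlockMat}) and replaces the external covariance formula by a standard Fourier--pullback computation, at the modest price of stating the swapped lemma and tracking the conjugation in $\widetilde{\nu}=\mathcal F(\overline{\nu})$.
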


\begin{proof}
From \cite[Proposition 4]{Bayer} we have that
\begin{equation}\label{MWFourier1}
W_R(\widehat{\mu},\widehat{\nu})(x,\omega)=|{\rm \det}R|^{-1}
W_T(\mu,\nu)(-\omega,x)
\end{equation}
where
\begin{equation}\label{MWFourier2}
T=\left(
\begin{array}{cc}
{\rm Id} & 0 \\
0 &-{\rm Id}
\end{array}
\right) (R^{-1})^{\rm t} \left(
\begin{array}{cc}
0 &{\rm Id} \\
{\rm Id} &0
\end{array}
\right).
\end{equation}
Here and in the following, for convenience we write explicitly the
names of the variables in the Matrix-Wigner transform (of course,
when $\mu$ and $\nu$ are distributions, the notation $(-\omega,x)$
in \eqref{MWFourier1} stands for the corresponding linear change of
variables). A simple calculation shows that, from \eqref{MWFourier1}
and \eqref{MWFourier2} we have
\begin{equation}\label{MWFourier3}
W_T(\mu,\nu)(x,\omega)=|{\rm \det}T|^{-1}
W_R(\widehat{\mu},\widehat{\nu})(\omega,-x)
\end{equation}
where
\begin{equation}\label{MWFourier4}
R=\left(
\begin{array}{cc}
{\rm Id} & 0 \\
0 &-{\rm Id}
\end{array}
\right) (T^{-1})^{\rm t} \left(
\begin{array}{cc}
0 &{\rm Id} \\
{\rm Id} &0
\end{array}
\right).
\end{equation}
Now, from \eqref{T-1} and \eqref{MWFourier4} we get
$$
R=\left(
\begin{array}{cc}
C &A \\
-D &-B
\end{array}
\right),
$$
and $R$ is invertible since $T$ is invertible. Then, writing
\begin{equation}\label{R-1}
R^{-1}=\left(
\begin{array}{cc}
E &F \\
G &H
\end{array}
\right),
\end{equation}
from \eqref{BlockMat1} and \eqref{BlockMat2} we obtain
\begin{equation}\label{BlockMat3}
\begin{split}
&{\rm det}A\ne0 \Longrightarrow {\rm det}F\ne0,\qquad
{\rm det}B\ne0 \Longrightarrow {\rm det}E\ne0,\\
&{\rm det}C\ne0 \Longrightarrow {\rm det}H\ne0,\qquad {\rm det}D\ne0
\Longrightarrow {\rm det}G\ne0.
\end{split}
\end{equation}
Now, from \eqref{MWFourier3} we have
$$
\Pi_2\supp W_T(\mu,\nu) = \Pi_1\supp
W_R(\widehat{\mu},\widehat{\nu}),
$$
and so, by \eqref{R-1} and \eqref{BlockMat3} the thesis follows by
an application of Proposition \ref{prop11} to $\Pi_1\supp
W_R(\widehat{\mu},\widehat{\nu})$.
\end{proof}

From Propositions \ref{prop11} and \ref{prop11FT} we immediately
have the following corollary.
\begin{corollary}\label{cor:11+13}
Let $T$ be an invertible matrix satisfying
$$
T^{-1}=\left(
\begin{array}{cc}
A &B \\
C &D
\end{array}
\right).
$$
\begin{itemize}
\item[(i)]
Suppose that ${\rm det}A\ne0$ and ${\rm det}B\ne0$. If both
$\Pi_1\supp W_T(\mu,\nu)$ and $\Pi_2\supp W_T(\mu,\nu)$ are u.d.
sets, then $\supp\mu$, $\supp\nu$, $\supp\widehat{\mu}$ and
$\supp\widehat{\nu}$ are u.d. sets.
\item[(ii)]
Suppose that ${\rm det}A\ne0$ or ${\rm det}B\ne0$. If both
$\Pi_1\supp W_T(\mu)$ and $\Pi_2\supp W_T(\mu)$ are u.d. sets, then
$\supp\mu$ and $\supp\widehat{\mu}$ are u.d. sets.
\end{itemize}
\end{corollary}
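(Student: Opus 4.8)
The plan is to read off both assertions directly from Propositions \ref{prop11} and \ref{prop11FT}, matching each nondegeneracy condition on the blocks $A$ and $B$ of $T^{-1}$ to the corresponding conclusion; no new ingredient is needed, so the whole difficulty is bookkeeping. For part (i) I would proceed in two passes. First, using that $\Pi_1\supp W_T(\mu,\nu)$ is u.d., Proposition \ref{prop11} gives that $\det A\ne0$ forces $\supp\mu$ to be u.d. and $\det B\ne0$ forces $\supp\nu$ to be u.d. Second, using that $\Pi_2\supp W_T(\mu,\nu)$ is u.d., Proposition \ref{prop11FT} (for the same $T^{-1}$) gives that $\det A\ne0$ forces $\supp\widehat\nu$ to be u.d. and $\det B\ne0$ forces $\supp\widehat\mu$ to be u.d. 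Since both determinant hypotheses are assumed, all four supports $\supp\mu$, $\supp\nu$, $\supp\widehat\mu$, $\supp\widehat\nu$ are u.d.

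For part (ii) the key observation is that $\nu=\mu$, so that $W_T(\mu)=W_T(\mu,\mu)$ and the conclusions about $\nu,\widehat\nu$ coincide with those about $\mu,\widehat\mu$; consequently a single determinant condition already produces both desired conclusions. I would split according to the disjunction. If $\det A\ne0$, then $\Pi_1\supp W_T(\mu)$ u.d. yields $\supp\mu$ u.d. by Proposition \ref{prop11}, while $\Pi_2\supp W_T(\mu)$ u.d. yields $\supp\widehat\nu=\supp\widehat\mu$ u.d. by Proposition \ref{prop11FT}. If instead $\det B\ne0$, the same two hypotheses yield $\supp\nu=\supp\mu$ u.d. and $\supp\widehat\mu$ u.d., respectively. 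In either case both $\supp\mu$ and $\supp\widehat\mu$ are u.d.

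The only point requiring care — and the closest thing to an obstacle here — is precisely this collapse in part (ii): one must not simply transcribe the argument of (i), but instead notice that when $\mu=\nu$ a single condition on $A$ (or on $B$) routes through Proposition \ref{prop11} on the space side and through Proposition \ref{prop11FT} on the frequency side, so that $B$ (respectively $A$) plays no role. Once this matching is made explicit, every remaining step is a direct invocation of the two preceding propositions.
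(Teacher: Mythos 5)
Your proposal is correct and is essentially the paper's own (implicit) proof: the paper derives this corollary as an immediate consequence of Propositions \ref{prop11} and \ref{prop11FT}, exactly as you do. Your bookkeeping is accurate on the one subtle point, namely the crossed attributions in Proposition \ref{prop11FT} (${\rm det}A\ne0$ gives $\supp\widehat{\nu}$ u.d.\ and ${\rm det}B\ne0$ gives $\supp\widehat{\mu}$ u.d.), which is precisely what makes the single disjunctive hypothesis suffice in part (ii) when $\mu=\nu$.
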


The basic connection between our setting and the hypothesis assumed
in \cite{olevskii} is the following remark.

\begin{remark}{\rm
In the case
$$
T=\left(
    \begin{array}{cc}
    {\rm Id} & 0 \\
      0 & {\rm -Id} \\
    \end{array}
  \right)
$$
the matrix-Wigner transform  is given by $W_T(\mu,\nu)=\mathcal
F_2(\mu\otimes\overline\nu)(x,\omega)=\mu(x)\overline{\widehat\nu(\omega)}$,
therefore Lev-Olevskii hypothesis (see \cite{olevskii})
$$
\mu=\sum_{\alpha\in\Lambda}a_\alpha \delta_\alpha; \ \ \
\widehat\nu=\sum_{\beta\in S}b_\beta\delta_\beta
$$ with $\Lambda, S$ u.d. sets, (and with $\mu=\nu$ in \cite{olevskii})
is a particular case of the hypothesis
$$
W_T(\mu,\nu)=\sum_{(r,s)\in A\times
B}c_{r,s}\delta_{(r,s)}
$$
where $A, B$ are u.d. sets.}
\end{remark}

The previous results, Propositions \ref{prop11}, \ref{prop11FT} and
Corollary \ref{cor:11+13}, where we can obtain u.d.ness of the
supports of signals from that of the their matrix-Wigner transform,
include many classical time-frequency transforms such as the STFT
and the Rihaczek transforms. As an example we consider the {\it
Ambiguity function}.

\begin{example}\label{ambiguity}{\rm
The Ambiguity function is defined as
$$
A(\mu,\nu)(x,\omega)=\int_{\mathbb R^d}e^{-2\pi i \omega
t}\mu(t+x/2)\overline{\nu(t-x/2)}\, dt
$$
for $\mu, \nu\in \mathcal S(\mathbb R^d)$, which is generalized to
$$
A(\mu,\nu)=W_T(\mu,\nu)$$ for $T=\left(
\begin{array}{cc}
    \frac{1}{2}{\rm Id} & {\rm Id} \\
    -\frac{1}{2}{\rm Id} & {\rm Id} \\
\end{array}
\right).$ Then
$\Pi_1\supp A(\mu,\nu)$ u.d. implies $\supp \mu$ and $\supp\nu$ u.d. In fact, it suffices to apply Proposition \ref{prop11}.}
\end{example}

Our next aim is to give a version of Theorem \ref{th:main} for the case of $W_T(\mu,\nu)$ for a $2\times
2$ matrix $T$, giving then a general picture of the situation for
the case $\mu,\nu\in\mathcal{S}'(\mathbb{R})$.

\par\medskip

Let $T:{\mathbb R}^2\to {\mathbb R}^2$ be an invertible linear transformation with inverse
$$T^{-1} = \left(\begin{array}{cc} a & b \\ c & d\end{array}\right).$$


\par\medskip
We assume from now on that $ab\neq 0.$
\begin{theorem}\label{measures}
Let $\mu, \nu\in {\mathcal S}^\prime({\mathbb
R})\setminus\left\{0\right\}$ satisfy $ W_T(\mu,\nu) = \sum_{(r,
s)\in A\times B} c_{r,s} \delta_{(r,s)}$ where $A, B$ are uniformly
discrete sets. Then $\mu$ and $\nu$ are measures supported in u.d. sets.
\end{theorem}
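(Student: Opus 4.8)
The plan is to follow the one-dimensional argument for Theorem \ref{th:main} as closely as possible, the new ingredient being that the diagonal map $\frac{r+s}{2}$ is replaced by the two linear forms determined by $T^{-1}$. First I would record the u.d.\ conclusions: since $\Pi_1\supp W_T(\mu,\nu)\subseteq A$ is u.d.\ and, in the $2\times 2$ case, $\det A=a$ and $\det B=b$ with $ab\neq 0$, Proposition \ref{prop11} gives at once that $\supp\mu$ and $\supp\nu$ are u.d. Thus by Lemma \ref{lem:sum_deltas} I may write $\mu=\sum_{r\in D_\mu}\sum_{j\le N}a_r^j\delta_r^{(j)}$ and $\nu=\sum_{s\in D_\nu}\sum_{k\le M}b_s^k\delta_s^{(k)}$, where $N$ and $M$ are the exact orders, so that $a_{r_*}^N\neq0$ and $b_{s_*}^M\neq0$ for some $r_*\in D_\mu$, $s_*\in D_\nu$ (both sets being nonempty since $\mu,\nu\neq0$). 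The goal is to prove $N=M=0$. The key geometric fact, obtained exactly as in Lemma \ref{lem:support} through Proposition \ref{T} and Lemma \ref{Proj}, is that $\supp T(\mu\otimes\overline\nu)=T^{-1}(D_\mu\times D_\nu)$, so that $\{ar+bs:\ r\in D_\mu,\ s\in D_\nu\}=\Pi_1\supp W_T(\mu,\nu)\subseteq A$ is u.d.

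Next I would make the pairing explicit. Writing $W_T(\mu,\nu)=\mathcal F_2\big(T(\mu\otimes\overline\nu)\big)$ and using the composition formula from the proof of Proposition \ref{T}, for real-valued $\phi_1,\phi_2$ one gets
$$\langle W_T(\mu,\nu),\phi_1\otimes\phi_2\rangle=|\det T^{-1}|\,\big\langle \mu_u\otimes\overline{\nu_v},\ \phi_1(au+bv)\,\widehat{\phi_2}(cu+dv)\big\rangle.$$
Substituting the expansions of $\mu,\nu$ and applying the Leibniz and chain rules, the right-hand side becomes a double series over $(r,s)\in D_\mu\times D_\nu$ of terms $a_r^j\overline{b_s^k}$ times products $\phi_1^{(p)}(ar+bs)\,\widehat{\phi_2}^{(q)}(cr+ds)$ with $p+q=j+k$, in analogy with (\ref{eq:wigner}). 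The decisive observation is that the maximal order of a derivative falling on $\phi_1$ is $p=j+k\le N+M$, attained only for $j=N$, $k=M$, $q=0$, and that the coefficient of $\phi_1^{(N+M)}(ar+bs)$ is then $a^N b^M$, which is nonzero precisely because $ab\neq0$.

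I would then localize and rescale exactly as in the case $d=1$. Fixing $w_0\in\{ar+bs\}$ and choosing $\phi_1$ supported near $w_0$, vanishing to order $N+M-1$ there with $\phi_1^{(N+M)}(w_0)\neq0$, all terms except those with $ar+bs=w_0$ and $p=N+M$ drop out; using that $W_T(\mu,\nu)$ is a measure to bound $\langle W_T(\mu,\nu),\phi_1\otimes\phi_2\rangle$ by $C\|\phi_1\|_\infty\|\phi_2\|_\infty$ and letting the support shrink via $\phi_1=\psi(t(\cdot-w_0))$, $t\to\infty$, yields
$$\sum_{(r,s):\,ar+bs=w_0}a_r^N\,\overline{b_s^M}\,\widehat{\phi_2}(cr+ds)=0$$
for every $\phi_2\in\mathcal D(\mathbb R)$. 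On the fibre $ar+bs=w_0$ one has $cr+ds=-\tfrac{\det T^{-1}}{b}\,r+\tfrac{d}{b}w_0$, which, since $\det T^{-1}\neq0$ and $b\neq0$, is an injective affine function of $r$ with u.d.\ range; hence, interpreting $\phi_2\mapsto\sum a_r^N\overline{b_s^M}\widehat{\phi_2}(cr+ds)$ as a tempered distribution and isolating a single point of the range by a suitable $\phi_2$, I obtain $a_r^N\,\overline{b_s^M}=0$ for all $(r,s)$ with $ar+bs=w_0$. Letting $w_0$ range over $\{ar+bs\}$ gives $a_r^N\,\overline{b_s^M}=0$ for every $r\in D_\mu$, $s\in D_\nu$; taking $r=r_*$, $s=s_*$ contradicts $a_{r_*}^N\overline{b_{s_*}^M}\neq0$ unless $N+M=0$. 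Thus $N=M=0$, so $\mu$ and $\nu$ are measures, supported in the u.d.\ sets found in the first step.

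The hard part will be the bookkeeping in the two middle steps rather than any new idea: writing out the Leibniz/chain-rule expansion with the correct constants and conjugations, and above all justifying the absolute convergence of the double series and the fact that it defines a tempered distribution in $\phi_2$. For the latter I would prove a variant of Lemma \ref{lem:double-series} in which the non-degenerate forms $r+s$, $r-s$ are replaced by $ar+bs$, $cr+ds$; since $(r,s)\mapsto(ar+bs,cr+ds)$ is the linear isomorphism $T^{-1}$ and $D_\mu\times D_\nu$ is u.d.\ in $\mathbb R^2$, its image is u.d.\ and the same estimate transfers, the polynomial bound on $a_r^j,b_s^k$ from Lemma \ref{lem:sum_deltas} being absorbed by the Schwartz decay of $\widehat{\phi_2}$ along the fibres.
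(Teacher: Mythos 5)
Your proposal is correct and follows essentially the same route as the paper's own proof: Proposition \ref{prop11} plus Lemma \ref{lem:sum_deltas} to get the delta expansions of $\mu$ and $\nu$, the Leibniz expansion of $\langle W_T(\mu,\nu),\phi_1\otimes\phi_2\rangle$ with absolute convergence obtained from u.d.ness of $T^{-1}(S_\mu\times S_\nu)$, localization and rescaling at a point of the u.d. set $\{ar+bs\}$ to isolate the top-order term (whose coefficient $a^Nb^M$ is nonzero since $ab\neq0$), and the fiber argument exploiting that $cr+ds$ is an injective affine function of $r$ on each fiber $\{ar+bs=w_0\}$. The only difference is organizational: the paper assumes $M\geq1$, kills the top coefficients of $\mu$, recurses down to conclude $\mu=0$ (contradicting $\mu\neq0$), and then swaps the roles of $\mu$ and $\nu$, whereas you work with the exact orders and derive the contradiction $a_{r_*}^N\overline{b_{s_*}^M}=0$ directly, dispensing with the recurrence and the swap.
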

\begin{proof}
We apply Proposition \ref{prop11} to conclude that $\mu$ and
$\nu$ have u.d. support, denoted $S_\mu$ and $S_\nu$
respectively. According to Lemma \ref{lem:sum_deltas} we can put
$$\mu=\sum_{r\in S_\mu}\sum_{j=0}^N a_r^j \delta^{(j)}_r,\ \ \nu=\sum_{s\in S_\nu}\sum_{k=0}^M b_s^k \delta^{(k)}_s$$ with $a_r^j, b_s^k\in {\mathbb C}.$ We now assume $M\geq 1$ and $b_{s_0}^M\neq 0$ for some $s_0\in S_\nu$ and show that $\mu = 0,$ which is a contradiction from where we conclude that $\nu$ is a measure.
\par\medskip
For any real-valued functions $\phi_1, \phi_2 \in {\mathcal S}({\mathbb R})$ we have, for $\phi = \phi_1\otimes\phi_2,$
$$
\langle W_T(\mu, \nu), \phi\rangle = \langle T\left(\mu\otimes
\overline{\nu}\right), {\mathcal F}_2^{-1}\phi\rangle = |\mbox{det}\
T^{-1}| \langle \mu\otimes \overline{\nu}, \left({\mathcal
F}_2^{-1}\phi\right)\circ T^{-1}\rangle.$$ Hence
$$
|\mbox{det} T| \langle W_T(\mu, \nu), \phi\rangle = \langle \mu_u,
\langle \nu_v, \left( \phi_1 \otimes
 \overline{\widehat{\phi_2}} \right)\circ
T^{-1}(u,v)\rangle\rangle.$$ Since
$$
\begin{array}{*1{>{\displaystyle}c}}
\langle \nu_v, \left(\phi_1 \otimes
\overline{\widehat{\phi_2}}
\right)\circ T^{-1}(u,v)\rangle = \\ \\
\sum_{s\in S_\nu}\sum_{k=0}^M (-1)^k b_s^k \sum_{m=0}^k{k\choose m}
\phi_1^{(m)}(au+bs)b^m \widehat{\phi_2}^{(k-m)}(cu+ds)d^{k-m}\end{array}$$ we finally
obtain

\begin{equation}\label{eq:wigner2}
\begin{array}{*1{>{\displaystyle}c}}
|\mbox{det} T| \langle W_T(\mu, \nu), \phi\rangle = \\ \\
\sum_{j=0}^N\sum_{\ell=0}^j\sum_{k=0}^M\sum_{m=0}^k
\lambda^{\ell,m}_{j,k}\sum_{r\in S_\mu}\sum_{s\in S_\nu}a_r^j
\overline{b_s^k}\phi_1^{(m+\ell)}(ar+bs)\overline{\widehat{\phi_2}}^{(k+j-m-\ell)}(cr+ds),
\end{array}
\end{equation} where
$$
\lambda^{\ell,m}_{j,k} = (-1)^{k+j}{k\choose m}{j\choose \ell}a^\ell
b^m c^{j-\ell} d^{k-m}.$$

%
%

We show next that for every fixed $j, l, k, m$ the double sum over
$r,s$ in \eqref{eq:wigner2} is absolutely convergent. We recall the
following facts.
\\
There exists $\alpha>0$ such that, setting for short $\langle t
\rangle=1+|t|,$
\begin{equation}\label{eq:bound<r>} \sup_{r\in
S_\mu}\left|a^j_r\right|\langle r \rangle^{-\alpha} < \infty,\ \
\sup_{s\in S_\nu}\left|b^k_s\right|\langle s \rangle^{-\alpha} <
\infty.
\end{equation}
The functions $\phi_1^{(m+\ell)}$and
$\widehat{\phi}_2^{(k+j-m-\ell)}$ are in $\mathcal
S(\mathbb R)$, therefore, for every $\beta>0$ and suitable constant
$C>0$, we have
$$|\phi_1^{(m+\ell)}(ar+bs)|\le C \langle ar+bs
\rangle^{-\beta}, \ \
|\widehat{\phi}_2^{(k+j-m-\ell)}(cr+ds)|\le C\langle
cr+ds\rangle^{-\beta}.$$ Furthermore we remark that linear
bijections on $\mathbb R^2$ are bi-continuous and therefore preserve
u.d. sets, so that $T^{-1}(S_\mu\times S_\nu)$ is u.d. Supposing then
that $T$ has matrix
$$T = \left(\begin{array}{cc} a' & b' \\ c' & d'\end{array}\right),$$
and indicating by $C$ generic (possibly different) suitable
constants, we have:
\begin{equation*}
\begin{array}{*1{>{\displaystyle}l}}
\ \ \ \ \sum_{r\in S_\mu}\sum_{s\in S_\nu} |a_r^j| \, |b_s^k| \,
|\phi_1^{(m+\ell)}(ar+bs)| \,
|\widehat{\phi}_2^{(k+j-m-\ell)}(cr+ds)|
\\
\le C \sum_{r\in S_\mu}\sum_{s\in S_\nu}  \langle r
\rangle^{\alpha}\, \langle s \rangle^{\alpha} \, \langle ar+bs
\rangle^{-\beta} \, \langle cr+ds\rangle^{-\beta}
\\
\le C \sum_{(u,v)\in T^{-1}(S_\mu\times S_\nu)}  \langle a'u+b'v
\rangle^{\alpha}\, \langle c'u+d'v \rangle^{\alpha} \, \langle u
\rangle^{-\beta} \, \langle v \rangle^{-\beta}
\\
\le C \sum_{(u,v)\in T^{-1}(S_\mu\times S_\nu)}  \langle u
\rangle^{\alpha}\, \langle v \rangle^{\alpha}\, \langle u
\rangle^{\alpha} \, \langle v \rangle^{\alpha} \, \langle u
\rangle^{-\beta} \, \langle v \rangle^{-\beta}
\\
\le C \sum_{(u,v)\in T^{-1}(S_\mu\times S_\nu)}
(1+|u|^2+|v|^2)^{(2\alpha-\beta)},
\end{array}
\end{equation*}
where $\beta$ can be chosen arbitrary large and $T^{-1}(S_\mu\times
S_\nu)$ is a u.d. set, and therefore the last sum is convergent,
i.e. \eqref{eq:wigner2} is absolutely convergent.


Let us take now $$0 < \varepsilon < \delta(A):= \inf\{|r -
r^\prime|:\ r, r^\prime\in A,\ r\neq r^\prime\}.$$ Fix $r_0 \in
S_\mu, s_0\in S_\nu$, put $x_0 = ar_0+bs_0$ and choose
$\phi_1\in {\mathcal D}(x_0-\varepsilon, x_0+\varepsilon)$ such that
$\phi_1^{(n)}(x_0)=0$ for $n=0, \dots , N+M-1$ whereas
$\phi^{(N+M)}_1(x_0)\neq 0.$ Then, for $\phi_2 \in {\mathcal
S}({\mathbb R})$ (real valued), we have
$$
|\mbox{det} T| \langle W_T(\mu, \nu), \phi\rangle = a^N
b^M\phi_1^{(N+M)}(x_0)\sum_{(r,s) \in D}a^N_r
\overline{b_s^M}\overline{\widehat{\phi}_2}(cr+ds).$$
Here $D = \{(r,s)\in S_\mu\times S_\nu:\ ar+bs = x_0\}.$ Hence, if
$\phi_2$ has also compact support,
$$
\begin{array}{*2{>{\displaystyle}l}}
    \left| \phi_1^{(N+M)}(x_0)\sum_{(r, s) \in D}a^N_r
\overline{b_s^M}\widehat{\phi}_2(cr+ds)\right|
& = |a|^{-N}|b|^{-M}|\mbox{det} T|\left| \langle
W_T(\mu,\nu), \phi_1\otimes \phi_2
    \rangle \right|\\ & \\ & \leq C ||\phi_1||_\infty
    ||\phi_2||_\infty,\end{array}$$ where the constant $C$ depends on the (compact) support of $\phi_1\otimes \phi_2.$ Next, we fix  $\psi \in {\mathcal D}(-\varepsilon, \varepsilon)$
such that $\psi^{(n)}(0)=0$ for $n=0, \dots , N+M-1$ and
$\psi^{(N+M)}(0)=1,$ and for each $t\geq 1,$ let us consider
$\psi^t(x):=\psi(t x)$ and $\phi^t_1(x)=\psi^t(x-r_0).$ As in the proof of Theorem \ref{th:main} we conclude that
\begin{equation}\label{eq:sum-02}
\sum_{(r,s)\in
D}a_r^N\overline{b_s^M}\widehat{\phi_2}(cr+ds)
= 0
\end{equation} for every real valued $\phi_2\in {\mathcal D}({\mathbb R}).$ To discuss the meaning of the obtained expression we need some notation. Denote
$$
S = \left\{r\in S_\mu:\ (r,s)\in D\ \mbox{for some}\ s\in S_\nu\right\}.$$ For $r\in S$ we denote by $s(r)$ the unique $s\in S_\nu$ such that $(r,s)\in D.$ Observe that there are constants $\alpha, \beta$ such that $\alpha\neq 0$ and $cr+ds = \alpha r + \beta$ whenever $(r,s)\in D.$ From (\ref{eq:bound<r>}) and the fact that $S$ is a u.d. set it follows that
$$
\sum_{r\in S}\overline{a_r^N}b^M_{s(r)}\delta_{\alpha r + \beta}$$ defines a tempered distribution. Condition (\ref{eq:sum-02}) means that the Fourier transform of that distribution vanishes. Consequently
$$
 \overline{a_r^N}b^M_{s(r)} = 0\ \ \forall r\in S.$$ Since $s(r_0) = s_0$ and $b_{s_0}^M\neq 0$ we conclude $a_{r_0}^N = 0.$ Since $r_0\in S_\mu$ is arbitrary we conclude $a_r^N = 0$ for every $r\in S_\mu.$ Proceeding by recurrence on the order of $\mu$ we finally get that $a_r^j = 0$ for all $r\in S_\mu$ and $0\leq j\leq N,$ from where it follows $\mu = 0.$ This contradiction proves that $M = 0$ and $\nu$ is a measure. The same argument but swapping the role of the distributions proves that also $\mu$ is a measure.
\end{proof}

\begin{corollary}\label{cor:Theorem_Fourier_side}
Under the same hypothesis as in Theorem \ref{measures}
$\widehat{\mu}$ and $\widehat{\nu}$ are measures supported on
u.d. sets.
\end{corollary}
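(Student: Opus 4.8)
The plan is to exploit the Fourier duality between $W_T(\mu,\nu)$ and $W_R(\widehat{\mu},\widehat{\nu})$ that was established in the proof of Proposition \ref{prop11FT}, and thereby to reduce the assertion to a \emph{second} application of Theorem \ref{measures}, this time to the pair $(\widehat{\mu},\widehat{\nu})$ and the matrix $R$ defined by \eqref{MWFourier4}. Since the Fourier transform is a bijection of ${\mathcal S}'({\mathbb R})$, from $\mu,\nu\neq 0$ we get $\widehat{\mu},\widehat{\nu}\in{\mathcal S}'({\mathbb R})\setminus\{0\}$, so the non-triviality hypothesis of Theorem \ref{measures} is automatically preserved on the Fourier side.

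First I would check that $W_R(\widehat{\mu},\widehat{\nu})$ has exactly the form required by Theorem \ref{measures}. Rewriting \eqref{MWFourier3} gives $W_R(\widehat{\mu},\widehat{\nu}) = |\det T|\,\bigl(W_T(\mu,\nu)\circ\Psi\bigr)$, where $\Psi(u,v)=(-v,u)$ is the linear bijection corresponding to the change of variables $(\omega,-x)$. Applying Proposition \ref{T} to the discrete measure $W_T(\mu,\nu)=\sum_{(r,s)\in A\times B}c_{r,s}\delta_{(r,s)}$ sends each $\delta_{(r,s)}$ to a multiple of $\delta_{\Psi^{-1}(r,s)}=\delta_{(s,-r)}$ (the Jacobian factor being $1$), so that
$$W_R(\widehat{\mu},\widehat{\nu}) = |\det T|\sum_{(r,s)\in A\times B}c_{r,s}\,\delta_{(s,-r)}.$$
Hence $W_R(\widehat{\mu},\widehat{\nu})$ is again a discrete measure, now supported on the product $B\times(-A)$ of the two u.d.\ sets $B$ and $-A$, which is precisely the shape of the hypothesis of Theorem \ref{measures}.

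It then remains to verify the structural condition on the matrix, which is the only step that I expect to carry real content. As computed in the proof of Proposition \ref{prop11FT}, in dimension one \eqref{MWFourier4} yields $R=\left(\begin{smallmatrix} c & a \\ -d & -b \end{smallmatrix}\right)$, whose determinant is $ad-bc=\det(T^{-1})\neq 0$; therefore
$$R^{-1} = \det T\left(\begin{array}{cc} -b & -a \\ d & c \end{array}\right).$$
The product of the two top entries of $R^{-1}$ equals $ab\,(\det T)^2$, which is nonzero because $ab\neq 0$ by the standing assumption of Theorem \ref{measures} and $T$ is invertible. Thus $R$ satisfies that same standing assumption, and applying Theorem \ref{measures} to $\widehat{\mu},\widehat{\nu}$ and $R$ gives that $\widehat{\mu}$ and $\widehat{\nu}$ are measures supported on u.d.\ sets, as claimed. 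The delicate point—and the heart of the argument—is precisely this transfer of hypotheses to the Fourier side, in particular the translation of the nondegeneracy $ab\neq 0$ into the analogous condition for $R^{-1}$; once that is settled, the remainder is the bookkeeping of the linear change of variables.
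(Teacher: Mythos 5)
Your proposal is correct and takes essentially the same route as the paper: the paper's proof also transfers the hypothesis to the Fourier side via \cite[Proposition 4, (ii)]{Bayer} and applies Theorem \ref{measures} to $\widehat{\mu},\widehat{\nu}$ with the matrix $S=\left(\begin{smallmatrix}1 & 0\\ 0 & -1\end{smallmatrix}\right)(T^{-1})^{\rm t}\left(\begin{smallmatrix}0 & 1\\ 1 & 0\end{smallmatrix}\right)$, which is exactly your $R$ from \eqref{MWFourier4}, checking $\tilde{a}\tilde{b}\neq 0$ just as you check $ab(\det T)^2\neq 0$. Your write-up merely spells out the bookkeeping (the rotated support $B\times(-A)$ and the explicit form of $R^{-1}$) that the paper compresses into a ``direct calculation.''
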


\begin{proof} By \cite[Proposition 4, (ii)]{Bayer}
$$W_S(\widehat{\mu},\widehat{\nu})(x,\omega)=|{\rm det}S|^{-1}W_T(\mu,\nu)(-\omega,x)$$ where $$S=\left(\begin{array}{cc}1 & 0 \\0 & -1\end{array}\right)\left(\begin{array}{cc} a & b \\ c & d\end{array}\right)^t\left(\begin{array}{cc}0 & 1 \\1 & 0\end{array}\right).$$
Hence, $$S^{-1}=\left(\begin{array}{cc} \tilde{a} & \tilde{b} \\
\tilde{c} & \tilde{d}\end{array}\right)$$ where, as
$ab\neq 0$, by direct calculation or from \eqref{BlockMat1},
\eqref{BlockMat2}, we have $\tilde{a}\tilde{b}\neq 0$, and we can
then apply Theorem \ref{measures} to conclude.
\end{proof}

\begin{corollary}\label{cor:structure}
Let $\mu, \nu\in {\mathcal S}^\prime({\mathbb R})\setminus\left\{0\right\}$ satisfy
$ W_T(\mu,\nu) = \sum_{(r, s)\in A\times B} c_{r,s} \delta_{(r,s)}$ where $A, B$ are u.d. sets. Then, there are $a, \, b > 0$ such that $\mu$ is a finite linear combination of time-frequency shifts of $\sum_{n\in {\mathbb Z}}\delta_{na}$ and  $\nu$ is a finite linear combination of time-frequency shifts of $\sum_{n\in {\mathbb Z}}\delta_{nb}.$

\end{corollary}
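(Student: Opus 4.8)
The plan is to reduce the statement to the structure theorem of Lev and Olevskii, using the measure/quasicrystal conclusions already established for $W_T(\mu,\nu)$. First I would invoke Theorem \ref{measures} to deduce that $\mu$ and $\nu$ are measures whose supports $S_\mu, S_\nu$ are u.d. sets, and then Corollary \ref{cor:Theorem_Fourier_side} to deduce that $\widehat{\mu}$ and $\widehat{\nu}$ are likewise measures supported on u.d. sets. At this point each of $\mu$ and $\nu$ is a Fourier quasicrystal on ${\mathbb R}$ whose support and spectrum are both uniformly discrete, which is precisely the hypothesis needed to invoke the results of \cite{olevskii}.

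Next I would apply the Lev--Olevskii structure theorem \cite[Theorems 1,3]{olevskii} separately to $\mu$ and to $\nu$. Since we are in dimension $d=1$, the positivity assumption that \cite{olevskii} requires only for $d>1$ is not needed, so the theorem applies directly to these (complex) measures. Applied to $\mu$ it yields a lattice $L=a{\mathbb Z}$, finitely many translation vectors $\theta_1,\dots,\theta_N$ and trigonometric polynomials $P_1,\dots,P_N$ with $\mu=\sum_{j=1}^N P_j(x)\sum_{\lambda\in L+\theta_j}\delta_\lambda$, and an entirely analogous representation for $\nu$ with a possibly different lattice spacing $b>0$.

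Finally I would rewrite this representation as a finite linear combination of time-frequency shifts of the fixed comb $\sum_{n\in{\mathbb Z}}\delta_{na}$, which is the form demanded by the statement. Writing each $P_j(x)=\sum_k c_{j,k}\,e^{2\pi i\xi_{j,k}x}$ and observing that $\sum_{\lambda\in a{\mathbb Z}+\theta_j}\delta_\lambda=T_{\theta_j}\sum_{n\in{\mathbb Z}}\delta_{na}$, while multiplication by $e^{2\pi i\xi x}$ is exactly the action of $M_\xi$, one gets $P_j(x)\sum_{\lambda\in a{\mathbb Z}+\theta_j}\delta_\lambda=\sum_k c_{j,k}\,M_{\xi_{j,k}}T_{\theta_j}\sum_{n\in{\mathbb Z}}\delta_{na}$, exhibiting $\mu$ in the asserted form; the same computation handles $\nu$ with $b$ in place of $a$. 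This last step is pure bookkeeping, and the substantive content is entirely supplied by Theorem \ref{measures}, Corollary \ref{cor:Theorem_Fourier_side} and \cite{olevskii}. The only point that genuinely requires care is confirming that the Lev--Olevskii theorem is applicable in the present one-dimensional, not-necessarily-positive setting, which is exactly why the whole discussion is restricted to $\mu,\nu\in{\mathcal S}^\prime({\mathbb R})$.
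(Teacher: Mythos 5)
Your proposal is correct and follows exactly the paper's own route: apply Theorem \ref{measures} and Corollary \ref{cor:Theorem_Fourier_side} to conclude that $\mu,\nu,\widehat{\mu},\widehat{\nu}$ are measures supported on u.d.\ sets, then invoke \cite[Theorems 1,3]{olevskii} (valid without positivity since $d=1$). The only difference is that you spell out the bookkeeping converting the Lev--Olevskii representation into time-frequency shifts of a Dirac comb, which the paper leaves implicit.
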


\begin{proof} Theorem \ref{measures} and Corollary \ref{cor:Theorem_Fourier_side} imply that $\mu, \, \nu, \, \widehat{\mu}, \, \widehat{\nu}$ are measures supported on u.d. sets in ${\mathbb R}.$ We now apply \cite[Theorems 1,3]{olevskii} to conclude.

\end{proof}

\end{document}